\documentclass[10pt]{amsart}

\usepackage{enumerate}
\usepackage{amsmath,amssymb}

\newtheorem{theorem}{Theorem}[section]

\newtheorem{lemma}[theorem]{Lemma}

\newtheorem{proposition}[theorem]{Proposition}

\theoremstyle{definition}

\newtheorem{example}[theorem]{Example}

\def\B{{\mathcal{B}}}

\def\F{{\mathcal{F}}}
\def\H{{\mathcal{H}}}
\def\P{{\mathcal{P}}}
\def\Q{{\mathcal{Q}}}
\def\U{{\mathcal{U}}}

\def\C{{\mathbb{C}}}
\def\N{{\mathbb{N}}}

\sloppy

\usepackage{comment}

\begin{document}

\title[Isometries between projection lattices]{Isometries between projection lattices \\
of von Neumann algebras}

\author[M. Mori]{Michiya Mori}

\address{Graduate School of Mathematical Sciences, the University of Tokyo, Komaba, Tokyo, 153-8914, Japan.}
\email{mmori@ms.u-tokyo.ac.jp}

\subjclass[2010]{Primary 47B49, Secondary 46L10.} 

\keywords{von Neumann algebra; projection; isometry; Grassmann space}

\date{}

\begin{abstract}
We investigate surjective isometries between projection lattices of two von Neumann algebras. 
We show that such a mapping is characterized by means of Jordan $^*$-isomorphisms. 
In particular, we prove that two von Neumann algebras without type I$_1$ direct summands are Jordan $^*$-isomorphic if and only if their projection lattices are isometric. 
Our theorem extends the recent result for type I factors by G.P. Geh\'er and P. \v{S}emrl, which is a generalization of Wigner's theorem.
\end{abstract}
\maketitle
\thispagestyle{empty}

\section{Introduction}
The study of isometries between operator algebras has a long history. 
The first achievement in this field dates back to 1951 by Kadison \cite{K}. 
He proved that if $\phi\colon A\to B$ is a complex linear surjective isometry between two unital C$^*$-algebras, then $\phi(1)$ is a unitary operator in $B$ and the mapping $x\mapsto \phi(1)^{-1}\phi(x)$, $x\in A$ is a Jordan $^*$-isomorphism. 
(A linear bijection $J\colon A\to B$ between two C$^*$-algebras is called a \emph{Jordan $^*$-isomorphism} if it satisfies $J(x^*)=J(x)^*$ and $J(xy+yx)=J(x)J(y)+J(y)J(x)$ for any $x, y\in A$.) 
On the other hand, recall that the celebrated Mazur-Ulam theorem asserts that every surjective isometry between two Banach spaces is affine. 
Also, Mankiewicz's generalization \cite{Ma} of this theorem states that every surjective isometry between open connected subsets of Banach spaces is affine. 
This gives rise to a question which asks whether an analogous result holds for isometries between substructures of operator algebras. 
In recent years, there have been several great developments in such a study. 
Hatori and Moln\'ar proved that every surjective isometry between unitary groups of two von Neumann algebras extends uniquely to a real linear surjective isometry \cite{HM}. 
Tanaka applied this theorem to consider Tingley's problem for finite von Neumann algebras \cite{Tan}. 
Tingley's problem asks whether every surjective isometry between unit spheres of two Banach spaces admits a real linear extension.  
Stimulated by Tanaka's research, Tingley's problem began to be considered in various settings of operator algebras. 
See \cite{Mor}, \cite{Pe} and \cite{MO} for latest progresses in such a study.\smallskip

In this paper, we consider surjective isometries between projection lattices of two von Neumann algebras.  
Since projection lattices play very important roles in the theory of von Neumann algebra, 
it is natural to ask whether a result similar to Hatori and Moln\'ar's theorem holds for isometries between projection lattices. 
Here we give an observation which seems to imply an affirmative answer to this question. 
Let $M$ be a von Neumann algebra. 
The symbol $\P(M)$ denotes the projection lattice of $M$, 
and the symbol $\U(M)$ means the unitary group of $M$. 
That is, $\P(M):=\{p\in M\mid p=p^2=p^*\}$ and $\U(M):=\{u\in M\mid u^*u=1=uu^*\}$. 
Consider two projections $p_1:=\operatorname{diag}(1,0),\, p_2:=\operatorname{diag}(0,1) \in \P(M_2(M))$. 
Then we have 
\[
\left\{p\in\P(M_2(M)) \middle| \lVert p-p_1\rVert = \frac{1}{\sqrt{2}} = \lVert p-p_2\rVert\right\} 
= 
\left\{\frac{1}{2}
\begin{pmatrix}
1 & u\\
u^* & 1
\end{pmatrix}
\middle| u\in \U(M)
\right\}.
\]
This set is isometric to $\U(M)/2 = \{u/2\mid u\in\U(M)\}$. 
By the Hatori-Moln\'ar theorem, this set contains much information about $M$. 

It is well-known that the distance between two distinct connected components in the projection lattice of a von Neumann algebra is always $1$. 
Thus, in order to consider surjective isometries between projection lattices of von Neumann algebras, it suffices to consider isometries between connected components. 
In this paper, a connected component in $\P(M)$ which contains more than one element
is called a \emph{Grassmann space} in $M$. 
We know that every Jordan $^*$-isomorphism between two von Neumann algebras restricts to isometries between Grassmann spaces. 
Another example of an isometry between Grassmann spaces on $M$ can be obtained by the mapping $p\mapsto p^{\perp}\, (:= 1-p)$. 
In this paper, we show that every surjective isometry between Grassmann spaces can be decomposed to such mappings (Theorem \ref{thm-G}).\smallskip

As for the case $M=\B(\H)$, the research of isometries between Grassmann spaces is motivated by Wigner's unitary-antiunitary theorem.  
Wigner's theorem plays an important role in the mathematical foundation of quantum mechanics. 
Let $\P_1(\H)$ stand for the collection of rank $1$ projections on a complex Hilbert space $\H$. 
Note that $\P_1(\H)$ is a Grassmann space in $\B(\H)$. 
Wigner's theorem shows that every surjective isometry from $\P_1(\H)$ onto itself extends to a Jordan $^*$-automorphism on $\B(\H)$. 
See Introduction of \cite{BJM}.  
After several attempts (e.g.\ \cite{BJM}, \cite{GS1}) to generalize this result, 
Geh\'er and \v{S}emrl recently gave complete descriptions of surjective isometries between two Grassmann spaces in $\B(\H)$ \cite{GS2}. 
They made use of the idea of geodesics between two projections, which is also essential in our proof of Theorem \ref{thm-G}. 
See also \cite{S}, \cite{G}, \cite{Pa} and \cite{Mol}, \cite{QWWY}, in which mappings between projection lattices with an assumption which is different from ours are studied.\smallskip

In Section \ref{Grassmann}, we give the proof of Theorem \ref{thm-G}. 
Throughout the proof, we depend on the idea by Geh\'er and \v{S}emrl for $\B(\H)$ in \cite{GS2}, but we need more discussions in order to consider general von Neumann algebras.
Our strategy is as follows. 
We see that we may assume every projection in the Grassmann spaces is finite or properly infinite, and the mapping preserves orthogonality in both directions.   
By the Hatori-Moln\'ar theorem combined with the idea about $M_2(M)$ as above,  we can construct a Jordan $^*$-isomorphism between small subspaces. 
Using that, we extend the given mapping to a bijection between whole projection lattices which preserves orthogonality in both directions. 
Finally, we make use of a theorem due to Dye \cite{D} to complete the proof. 

In Section \ref{lattice}, by means of Theorem \ref{thm-G}, we consider surjective isometries between projection lattices. 
We show that two von Neumann algebras without type I$_1$ direct summands
are Jordan $^*$-isomorphic if and only if their projection lattices are isometric (Theorem \ref{thm-l}). 
We also consider concrete cases when two von Neumann algebras are factors.

\section{Isometries between Grassmann spaces}\label{Grassmann}
In this section, we use standard terminology and basic properties concerning geometry of projection lattices. 
See for example \cite[Chapter 6]{KR} or \cite[Chapter V.1]{Tak}. 

Let $M$ be a von Neumann algebra. 
The symbol $z(p)$ denotes the central support of $p$ for a projection $p \in \P(M)$.
Let $\P$ be a Grassmann space in $M$ and $p\in \P$. 
It is an elementary exercise to show that a projection $q\in \P(M)$ belongs to $\P$ if and only if $p$ is unitarily equivalent to $q$ in $M$.  
Thus the pair $(z(p), z(p^{\perp}))$ of central projections does not depend on the choice of $p\in\P$. 
In this paper, a Grassmann space $\P$ in $M$ is said to be \emph{proper} if 
$z(p)=1=z(p^{\perp})$ for every $p\in\P$. 
Fix a projection $p_0\in \P$.
The mapping $p \mapsto pz(p_0)z(p_0^{\perp})$ determines a bijection from $\P$ onto a proper Grassmann space in the von Neumann algebra $Mz(p_0)z(p_0^{\perp})$. 
Therefore, in order to consider surjective isometries between Grassmann spaces, 
we may assume that these Grassmann spaces are proper.\smallskip

The main theorem of this section is the following one:
\begin{theorem}\label{thm-G}
Let $M, N$ be von Neumann algebras and $\P\subset M$, $\Q\subset N$ be proper Grassmann spaces. 
Suppose $T\colon \P\to\Q$ is a surjective isometry.
Then there exist a Jordan $^*$-isomorphism $J\colon M\to N$ and a central projection $r\in \P(N)$ which satisfy
\[
T(p) = J(p)r + J(p^{\perp})r^{\perp},\quad p\in\P.
\]
\end{theorem}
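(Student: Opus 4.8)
The plan is to follow the Geh\'er--\v{S}emrl strategy for $\B(\H)$ but push it through for general von Neumann algebras. First I would reduce to a convenient normalization: by replacing $T$ with $p\mapsto T(p)^{\perp}$ on suitable central summands, one arranges that the ``dimension type'' is matched, and one should isolate the degenerate case where the projections in $\P$ (equivalently in $\Q$) are abelian or small, handling it separately; the remaining case is the one where we may assume every projection in $\P$ and $\Q$ is finite or properly infinite. The crucial first structural step is to recover orthogonality: using the metric geometry of $\P(M)$ --- the characterization of $\lVert p-q\rVert$ in terms of the angle operator and the fact that $p\perp q$ iff a suitable extremal distance condition holds --- one shows $T$ and $T^{-1}$ both preserve orthogonality. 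This is where the explicit computation with $M_2(M)$ from the introduction enters: the set of projections equidistant (at distance $1/\sqrt2$) from an orthogonal pair $p_1,p_2$ is an affine copy of $\U(M)/2$, so a surjective isometry carries such ``unitary spheres'' to ``unitary spheres,'' and Hatori--Moln\'ar lets us extract a real-linear isometry, hence (after the Kadison/Jordan analysis) a Jordan $^*$-isomorphism, on each $2\times 2$ corner $M_2(M)\subset M$ determined by an orthogonal pair of equivalent subprojections.

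The second block of work is to globalize. Having built compatible Jordan $^*$-isomorphisms on enough $2\times 2$ corners, I would patch them into a single orthogonality-preserving bijection $\widetilde T\colon \P(M)\to\P(N)$ extending $T$ on $\P$: concretely, decompose an arbitrary projection into a (possibly infinite) orthogonal sum of projections each equivalent to a subprojection of some $p\in\P$, apply the local maps, and check the result is independent of the decomposition using the coherence of the local Jordan maps on overlaps. One must verify $\widetilde T$ is a lattice orthoisomorphism (preserves $\le$, $\perp$, arbitrary joins/meets) in both directions --- continuity/normality of the construction in the strong topology is what makes the infinite sums behave. Finally, invoke Dye's theorem: an orthoisomorphism between the projection lattices of von Neumann algebras with no type $\mathrm{I}_2$ (here, appropriately, no abelian) summand is implemented by a Jordan $^*$-isomorphism $J\colon M\to N$. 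Tracking how $\widetilde T$ was built from $T$ then yields the stated formula $T(p)=J(p)r+J(p^{\perp})r^{\perp}$ with $r\in\P(N)$ central, the central projection $r$ accounting for the places where we used $p\mapsto p^{\perp}$ rather than the identity in the normalization step.

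The main obstacle I expect is the orthogonality-recovery and local-linearization step for arbitrary von Neumann algebras rather than factors. In $\B(\H)$ one has abundant finite-rank projections and a clean notion of ``dimension'' to anchor the geodesic arguments of Geh\'er--\v{S}emrl; in a general $M$ one must replace rank counting by comparison theory of projections, deal with the absence of minimal projections in type $\mathrm{II}$ and $\mathrm{III}$, and ensure that the metric invariants ($\lVert p-q\rVert$, and which pairs can be joined by a geodesic or lie in a common $2\times2$ corner) really do detect the lattice order and orthogonality in every type. Correctly isolating the small/abelian exceptional summands --- and checking that after removing them every relevant projection has an orthogonal equivalent copy, so that the $M_2(M)$ trick is available everywhere --- is the delicate bookkeeping that the proof will have to handle carefully before Dye's theorem can be applied cleanly.
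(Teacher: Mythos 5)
Your proposal follows essentially the same route as the paper: the metric/geodesic characterization of (near-)orthogonality via the equidistant set $m(p_1,p_2)\cong\U(p_1Mp_1)/2$, Hatori--Moln\'ar to linearize on $2\times 2$ corners, patching the local Jordan maps into an orthoisomorphism of the full projection lattices, and Dye's theorem, with the central projection $r$ absorbing the summands where $p\mapsto p^{\perp}$ intervenes. The difficulties you flag (the finite versus properly infinite split, replacing rank by comparison theory, and the fact that the metric only detects orthogonality up to a central symmetry) are exactly the ones the paper's proof spends its effort on.
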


We construct this section to some extent along the lines of the paper \cite{GS2} by Geh\'er and \v{S}emrl.
First we give a generalization of Halmos' two projections theorem in the setting of von Neumann algebras. 
The following result is known among operator algebraists (see for example \cite[pp.\ 306--308]{Tak}), but we concisely give its proof here. 

\begin{lemma}
Let $p, q$ be projections in a von Neumann algebra $M\subset \B(\H)$. 
Then there exist a partial isometry $v\in M$ and positive elements $a, b\in M$ which satisfy the following conditions: 
\[
\begin{gathered}
vv^*= p- p\wedge q - p\wedge q^{\perp},\quad v^*v= p^{\perp} - p^{\perp}\wedge q - p^{\perp}\wedge q^{\perp}, \\
a^2+b^2 = p- p\wedge q - p\wedge q^{\perp}, \\
q - p\wedge q - p^{\perp}\wedge q = a^2 + abv + v^*ba + v^*b^2v.
\end{gathered}
\]
\end{lemma}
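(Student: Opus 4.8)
The plan is to first cut $M$ down to a subalgebra in which $p$ and $q$ are in \emph{generic position} (all four meets $p\wedge q$, $p\wedge q^{\perp}$, $p^{\perp}\wedge q$, $p^{\perp}\wedge q^{\perp}$ vanish), and then to read off $v$, $a$, $b$ from the polar decomposition of $pqp^{\perp}$ together with the continuous functional calculus applied to $pqp$. For the reduction, I would set $z := 1 - p\wedge q - p\wedge q^{\perp} - p^{\perp}\wedge q - p^{\perp}\wedge q^{\perp}$. The four projections subtracted are pairwise orthogonal and each commutes with $p$ and with $q$, so $z$ is a projection commuting with $p$ and $q$, with $pz = p - p\wedge q - p\wedge q^{\perp}$, $p^{\perp}z = p^{\perp} - p^{\perp}\wedge q - p^{\perp}\wedge q^{\perp}$, and $qz = q - p\wedge q - p^{\perp}\wedge q$. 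Any projection lying below both $pz$ and $qz$ lies below $p\wedge q$ and below $z$, hence is $0$; arguing the same way for the three other pairs, the projections $pz$, $qz$ of the von Neumann algebra $zMz$ satisfy $(pz)\wedge(qz) = (pz)\wedge(z - qz) = (z - pz)\wedge(qz) = (z - pz)\wedge(z - qz) = 0$. Since $v$, $a$, $b$ and all the projections in the conclusion lie in $zMz$ and are expressed through $pz$, $p^{\perp}z$, $qz$, it suffices to prove the lemma inside $zMz$. So from now on I would assume $M$ is unital and that $p\wedge q = p\wedge q^{\perp} = p^{\perp}\wedge q = p^{\perp}\wedge q^{\perp} = 0$; the four asserted identities then simplify to $vv^* = p$, $v^*v = p^{\perp}$, $a^2 + b^2 = p$ and $q = a^2 + abv + v^*ba + v^*b^2v$.

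In this situation I would put $a := (pqp)^{1/2}$ and $b := (pq^{\perp}p)^{1/2}$ in $pMp$; they commute (both are functions of $pqp$) and $a^2 + b^2 = pqp + pq^{\perp}p = p$. The generic-position hypotheses force $pqp$ and $pq^{\perp}p$ to be injective on $p\H$, and $p^{\perp}qp^{\perp}$, $p^{\perp}q^{\perp}p^{\perp}$ to be injective on $p^{\perp}\H$. Letting $t := pqp^{\perp}$ with polar decomposition $t = v|t|$ (with $v\in M$), the identities $tt^* = pqp - (pqp)^2 = a^2b^2 = (ab)^2$ and $t^*t = D - D^2$, where $D := p^{\perp}qp^{\perp}$, combine with the injectivity statements to give that $tt^*$ has support $p$ and $t^*t$ has support $p^{\perp}$, hence $vv^* = p$ and $v^*v = p^{\perp}$. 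Moreover $v|t|v^*$ is a positive element squaring to $v|t|^2v^* = tt^* = (ab)^2$, so by uniqueness of positive square roots $v|t|v^* = ab$, whence $abv = v|t|v^*v = v|t| = t = pqp^{\perp}$ and, taking adjoints, $v^*ba = p^{\perp}qp$. Together with the trivial identity $pqp = a^2$, this accounts for three of the four summands in $q = pqp + pqp^{\perp} + p^{\perp}qp + p^{\perp}qp^{\perp}$.

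It remains to show $v^*b^2v = D = p^{\perp}qp^{\perp}$, which is the one point that needs real care and which I expect to be the main obstacle. Comparing the $pMp^{\perp}$-corners of $q = q^2$ gives $(pqp)t + tD = t$, that is, $tD = (p - a^2)t = b^2t$; substituting $t = v|t|$ and multiplying on the left by $v^*$ yields $|t|D = (v^*b^2v)|t|$. One cannot simply cancel $|t|$ here, since $|t|$ is in general not invertible in $M$. The way around this is to observe that $|t|^2 = t^*t = D - D^2$ is a polynomial in $D$, so $|t|$ commutes with $D$; then $(D - v^*b^2v)|t| = 0$, and since the support projection of $|t|$ is $p^{\perp}$ (equivalently, $|t|$ has dense range in $p^{\perp}\H$) while $D - v^*b^2v$ lies in $p^{\perp}Mp^{\perp}$, we conclude $v^*b^2v = D$. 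The reduction in the first paragraph and the square-root manipulations in the second are routine once one uses the standard facts that the meets $p\wedge q$, $p\wedge q^{\perp}$, etc., belong to $M$ and commute with $p$ and $q$, and that the partial isometry in a polar decomposition inside $M$ again belongs to $M$.
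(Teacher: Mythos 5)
Your proposal is correct and follows essentially the same route as the paper: both reduce to the part where $p$ and $q$ are in generic position, obtain $v$ from the polar decomposition of the off-diagonal corner $pqp^{\perp}$ (equivalently $e_1 q e_2$), and identify $a^2$, $b^2$ with the compressions $pqp$ and $pq^{\perp}p$ on that corner. The only difference is cosmetic: where the paper extracts the relations $a^2+b^2=e_1$, $\lvert x^*\rvert = ab$ by squaring the resulting $2\times 2$ matrix and using that it is a projection, you verify the same identities corner by corner from $q=q^2$ together with an injectivity/density argument, which fills in the range-projection claims the paper leaves implicit.
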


\begin{proof}
Put $e_1:= p- p\wedge q - p\wedge q^{\perp}$ and $e_2:= p^{\perp} - p^{\perp}\wedge q - p^{\perp}\wedge q^{\perp}$. 
It follows that the range projection of $x:= e_1(q-p\wedge q-p^{\perp}\wedge q)e_2$ is $e_1$, 
and the range projection of $x^*$ is $e_2$. 
Let $x= v\lvert x \rvert = \lvert x^* \rvert v$ be the polar decomposition. 
Then $vv^*= e_1$ and $v^*v= e_2$. 
We can identify each $y\in (e_1+e_2)M(e_1+e_2)$ with 
$\begin{pmatrix}
e_1ye_1 & e_1yv^* \\
vye_1 & vyv^*
\end{pmatrix} \in M_2(e_1Me_1)$. 
Then $q-p\wedge q-p^{\perp}\wedge q\, (\leq e_1+e_2)$ is identified with 
\[
\begin{split}
&\begin{pmatrix}
e_1(q-p\wedge q-p^{\perp}\wedge q)e_1 & e_1(q-p\wedge q-p^{\perp}\wedge q)v^* \\
v(q-p\wedge q-p^{\perp}\wedge q)e_1 & v(q-p\wedge q-p^{\perp}\wedge q)v^*
\end{pmatrix} \\
=& \begin{pmatrix}
e_1(q-p\wedge q-p^{\perp}\wedge q)e_1 & \lvert x^* \rvert \\
\lvert x^* \rvert & v(q-p\wedge q-p^{\perp}\wedge q)v^*
\end{pmatrix}
\in M_2(e_1Me_1).
\end{split}
\] 
Put $a:= (e_1(q-p\wedge q-p^{\perp}\wedge q)e_1)^{1/2}$ and $b:= (v(q-p\wedge q-p^{\perp}\wedge q)v^*)^{1/2}$. 
Since $\begin{pmatrix}
a^2 & \lvert x^* \rvert \\
\lvert x^* \rvert & b^2
\end{pmatrix}$ is a projection, it follows that $a, b$ and $\lvert x^* \rvert$ commute, $a^2+b^2= e_1$ and $\lvert x^* \rvert = ab$. 
Thus $q-p\wedge q-p^{\perp}\wedge q$ corresponds to $\begin{pmatrix}
a^2 & ab \\
ba & b^2
\end{pmatrix}$, and hence 
$q-p\wedge q-p^{\perp}\wedge q = a^2+ abv + v^*ba + v^*b^2v$.
\end{proof}

This lemma generalizes Halmos' two projection theorem. (See for example \cite[Theorem 2.1]{GS2}.)  
Consider the decomposition 
\[
\H = (p\wedge q^{\perp}) \H \oplus (p^{\perp} \wedge q) \H \oplus (p\wedge q) \H \oplus (p^{\perp}\wedge q^{\perp}) \H \oplus e_1\H \oplus e_2\H. 
\]
By the partial isometry $v\in M$ in the above lemma, we can identify $(e_1+e_2)M(e_1+e_2)$ with $M_2(e_1Me_1)$, and we can realize $p$ and $q$ as the following:
\[
p= \begin{pmatrix}
1&0&0&0&0&0 \\
0&0&0&0&0&0 \\
0&0&1&0&0&0 \\
0&0&0&0&0&0 \\
0&0&0&0&1&0 \\
0&0&0&0&0&0
\end{pmatrix},\quad 
q= \begin{pmatrix}
0&0&0&0&0&0 \\
0&1&0&0&0&0 \\
0&0&1&0&0&0 \\
0&0&0&0&0&0 \\
0&0&0&0&a^2&ab \\
0&0&0&0&ba&b^2
\end{pmatrix}, 
\]
and $a,b$ are injective operators in $(e_1Me_1)_+$ which satisfy $a^2+b^2=e_1$.\smallskip

For two projections $p, q \in \P(M)$, we write $p\vartriangle q$ 
if there exists a central projection $r \in M$ such that 
$pr \perp qr$ and $p^{\perp}r^{\perp} \perp q^{\perp}r^{\perp}$. 
Note that this relation is a generalization of the relation which is written as ``$\sim$'' in the paper \cite{GS2}. 
In our paper, we save the symbol $\sim$ for the Murray-von Neumann equivalence.
i.e.\ We write $p\sim q$ when there exists a partial isometry $v\in M$ such that $vv^*=p$ and $v^*v=q$. 
In addition, we write $p\prec q$ when there exists a partial isometry $v\in M$ such that $vv^*=p$ and $v^*v\leq q$.

\begin{proposition}
Let $M \subset \B(\H)$ be a von Neumann algebra, $\P$ be a Grassmann space in $M$ and $p, q\in \P$ with $\lVert p-q \rVert = 1$.  
Then we have $p\vartriangle q$ if and only if the following holds.\smallskip

\noindent\underline{Condition}\quad 
Set $m(p,q):= \{e \in \P \mid \lVert e-p\rVert = \lVert e-q\rVert = 1/{\sqrt{2}}\}$. 
Then $m(p,q)$ is not empty, and for every $p_0 \in m(p,q)$, there exists a unique path $\gamma\colon [0, \pi/2] \to \P$ which satisfies 
\[
\gamma(0)=p,\quad \gamma(\pi/2)=q,\quad \gamma(\pi/4)=p_0
\] 
and
\[
\lVert\gamma(\theta_1)-\gamma(\theta_2)\rVert = \sin \lvert \theta_1-\theta_2\rvert
\]
for all $\theta_1, \theta_2 \in [0, \pi/2]$.
\end{proposition}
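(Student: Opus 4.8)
I would prove both implications using the normal-form description of two projections $p,q$ provided by the preceding lemma and the accompanying matrix picture. Since $\|p-q\|=1$, the operator $a^{2}+b^{2}=e_{1}$ with $a,b$ injective in $(e_{1}Me_{1})_{+}$ and the spectrum of, say, $a$ reaches $1$ (indeed $\|p-q\|=1$ forces $a$ or $b$ to have $1$ in its spectrum, equivalently the corner $e_{1}Me_{1}$ is nontrivial or one of the corner pieces $p\wedge q^{\perp}$, $p^{\perp}\wedge q$ is nonzero). The key computation is to describe $m(p,q)$ explicitly: a projection $e\in\P(M)$ with $e\le$ the support of $p+q$ satisfies $\|e-p\|=\|e-q\|=1/\sqrt2$ exactly when, in the $6\times 6$ picture, $e$ vanishes on the $p\wedge q$ and $p^{\perp}\wedge q^{\perp}$ summands, is a rank-one-type rotation by $\pi/4$ on each of the remaining two-dimensional pieces, and on the $M_{2}(e_{1}Me_{1})$ corner has the form $\tfrac12\begin{pmatrix}1&w\\ w^{*}&1\end{pmatrix}$ for a unitary $w$ in the appropriate corner commuting suitably with $a,b$. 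This mirrors the $M_{2}(M)$ observation from the introduction, and it is where the relation $p\vartriangle q$ enters: $p\vartriangle q$ holds iff the summands $p\wedge q$ and $p^{\perp}\wedge q^{\perp}$ can be cut away by a central projection, i.e.\ iff (after cutting by the appropriate central projection $r$) we have $pr\perp qr$ and $p^{\perp}r^{\perp}\perp q^{\perp}r^{\perp}$ — equivalently $a,b$ are "full" in the sense that no central cutdown makes $e_{1}$ degenerate to $p\wedge q$ or $p^{\perp}\wedge q^{\perp}$.

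For the forward direction, assume $p\vartriangle q$, so there is a central $r$ with $pr\perp qr$ and $p^{\perp}r^{\perp}\perp q^{\perp}r^{\perp}$; replacing $q$ by $qr+q^{\perp}r^{\perp}$ one reduces to the case $p\perp q$. Then the normal form degenerates: the corner is $M_{2}(e_{1}Me_{1})$ with $a=e_{1}$, $b=0$ plus the off-diagonal $p\wedge q^{\perp}\oplus p^{\perp}\wedge q$ summands, and one writes down the path directly. For $p\perp q$, set $\gamma(\theta):=(\cos\theta\, p'+\sin\theta\, q')(\cos\theta\, p'+\sin\theta\, q')^{*}$-type formula; concretely, choosing a partial isometry $w$ with $ww^{*}=p$, $w^{*}w=q$ (possible on the relevant summand after the reduction, on the orthogonal-complement pieces it is forced differently but one handles those summands verbatim), put
\[
\gamma(\theta)=\cos^{2}\theta\, p+\sin^{2}\theta\, q+\cos\theta\sin\theta\,(w+w^{*}).
\]
One checks $\gamma(0)=p$, $\gamma(\pi/2)=q$, $\gamma(\theta)\in\P$, and $\|\gamma(\theta_{1})-\gamma(\theta_{2})\|=\sin|\theta_{1}-\theta_{2}|$ by a direct norm computation in $M_{2}$. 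Given $p_{0}\in m(p,q)$, $p_{0}$ determines such a $w$ (namely $p_{0}=\tfrac12(p+q+w+w^{*})$), so the path through $p_{0}$ exists; uniqueness follows because any metric geodesic of this form, when restricted to the $2\times2$ corner and using that a norm-$1/\sqrt2$ midpoint between two orthogonal rank-one-type projections is rigidly determined up to the unitary $w$, must coincide with $\gamma$.

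For the converse, I argue contrapositively: if $p\not\vartriangle q$, then for every central projection $r$ either $pr\not\perp qr$ or $p^{\perp}r^{\perp}\not\perp q^{\perp}r^{\perp}$, which in normal-form terms means the central support of $p\wedge q$ together with that of $p^{\perp}\wedge q^{\perp}$ does not exhaust $z(e_{1})$ — more precisely, on some nonzero central piece the operators $a,b$ are both "somewhere invertible", so that the $\pi/4$-midpoint condition forces a genuine continuum of choices of the internal unitary $w$ that are not separated by the geodesic equation, producing either non-uniqueness of the geodesic through a given $p_{0}$, or an obstruction to existence. The cleanest route is: show $m(p,q)=\emptyset$ precisely on the central piece where $p\wedge q$ or $p^{\perp}\wedge q^{\perp}$ is "too big", and on the remaining piece exhibit two distinct valid paths through the same $p_{0}$ by perturbing $w$ along a nontrivial one-parameter family of unitaries commuting with $a$ and $b$ (which exists exactly because $a$ is not a central projection there).

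\textbf{Main obstacle.} The hard part is the converse, specifically controlling the shape of an \emph{arbitrary} metric geodesic $\gamma$ in $\P$ (not assumed to have any algebraic form) and showing it must look like the $\cos^{2}\theta\,p+\sin^{2}\theta\,q+\ldots$ family — i.e.\ a rigidity statement for geodesic segments between projections at distance $1$. One expects to localize $\gamma$ inside the von Neumann algebra generated by $p,q$ (or a slightly larger corner), use that this algebra is $2$-subhomogeneous over an abelian algebra by the lemma, and then reduce to the genuinely $2\times2$ (type I$_{1}$-over-abelian) situation where the Geh\'er--\v{S}emrl analysis of geodesics in $\P_{1}(\C^{2})$ applies fiberwise; stitching the fibers together measurably to produce the needed non-unique geodesic, or the non-existence of $m(p,q)$, is the delicate technical point.
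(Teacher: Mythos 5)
Your overall strategy --- reduce to the two-projection normal form, exhibit the canonical geodesic in the orthogonal case, and extract structural constraints on $p,q$ from the existence and uniqueness of geodesics --- is the same as the paper's (which in turn follows Geh\'er--\v{S}emrl). But several concrete steps you rely on are incorrect or missing. First, your description of $m(p,q)$ is wrong: an element $e$ of $m(p,q)$ cannot vanish on $(p\wedge q)\H$, since $\lVert e-p\rVert=1/\sqrt2<1$ forces $\lVert e\xi\rVert\geq 1-1/\sqrt2>0$ for every unit vector $\xi\in(p\wedge q)\H$. (Take $p=\mathrm{diag}(1,0)\oplus 1$ and $q=\mathrm{diag}(0,1)\oplus 1$ in $\B(\C^3)$: every element of $m(p,q)$ acts as the identity on the third coordinate.) This matters because your converse hinges on the claim that $m(p,q)=\emptyset$ exactly where $p\wedge q$ or $p^{\perp}\wedge q^{\perp}$ is ``too big''; that is false. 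In the decisive case of the paper's proof --- $p$ and $q$ commute, $p\wedge q^{\perp}\sim p^{\perp}\wedge q\neq 0$, but $p\wedge q$ and $p^{\perp}\wedge q^{\perp}$ contain equivalent nonzero subprojections --- the set $m(p,q)$ is nonempty, and the obstruction is non-uniqueness of the geodesic, not emptiness.

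Second, your mechanism for producing two geodesics through the same midpoint does not work as stated: replacing the internal unitary $w$ by another unitary commuting with $a,b$ changes $\gamma(\pi/4)$, so it yields a different midpoint rather than a second path through the given $p_0$. What is needed is a $\theta$-dependent perturbation vanishing at $\theta=0,\pi/4,\pi/2$ and compatible with $\lVert\gamma(\theta_1)-\gamma(\theta_2)\rVert=\sin\lvert\theta_1-\theta_2\rvert$; the paper builds one by rotating an equivalent pair $p_1\leq p\wedge q$, $q_1\leq p^{\perp}\wedge q^{\perp}$ along a $1$-Lipschitz angle function, and handles the remaining bad configurations ($p\wedge q^{\perp}=0$ with $\lVert b\rVert=1$, and $p\wedge q^{\perp}\neq 0$ with $e_1\neq 0$) by the arguments of \cite[Lemmas 2.8 and 2.9]{GS2}. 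Your converse omits this case analysis entirely, as well as the step (from $m(p,q)\neq\emptyset$) giving $p\wedge q^{\perp}\sim p^{\perp}\wedge q$. Finally, the rigidity statement you flag as the ``main obstacle'' --- that an arbitrary metric path satisfying the displayed distance identity must have the algebraic form $\cos^2\theta\,p+\cdots$ --- is needed already for the \emph{forward} implication, since uniqueness of the path is part of Condition; the paper disposes of it by adapting \cite[Lemma 2.5]{GS2}, whereas in your write-up it remains an unproved gap on which both directions depend.
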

\begin{proof}
The discussion in the paper \cite{GS2} can be applied almost verbatim, so we give only a sketch of the proof.
 
Suppose $p\vartriangle q$. 
It suffices to consider the case $p \perp q$. 
Fix a partial isometry $v\in M$ which satisfies $vv^*=p$ and $v^*v=q$.
We can identify $x \in (p+q)M(p+q) (\subset M)$ with 
$\begin{pmatrix}
pxp & pxv^*\\
vxp & vxv^*
\end{pmatrix}
\in M_2(pMp)$. 
Then, it follows 
\[
m(p,q) = \left\{ \frac{1}{2} 
\begin{pmatrix}
1 & u\\
u^* & 1
\end{pmatrix}
\middle|\,
u\in \U(pMp)
\right\} 
\subset (p+q)M(p+q) \subset M.
\]
Let $u\in \U(pMp)$ and put $\displaystyle e:= \frac{1}{2} 
\begin{pmatrix}
1 & u\\
u^* & 1
\end{pmatrix} \in m(p,q)$. 
Then the same discussion as in \cite[Lemma 2.5]{GS2} shows that, the only path $\gamma\colon [0, \pi/2] \to \P$ as in Condition is given by 
\[
\gamma(\theta) =  
\begin{pmatrix}
\cos^2\theta & u\cos\theta\sin\theta\\
u^*\cos\theta\sin\theta & \sin^2\theta
\end{pmatrix},\quad \theta \in [0, \pi/2].
\]\medskip

Suppose $p$ and $q$ satisfy Condition. 
We decompose $p$ and $q$ as above: 
\[
\begin{gathered}
\H = (p\wedge q^{\perp}) \H \oplus (p^{\perp} \wedge q) \H \oplus (p\wedge q) \H \oplus (p^{\perp}\wedge q^{\perp}) \H \oplus e_1\H \oplus e_2\H, \\ 
p= \begin{pmatrix}
1&0&0&0&0&0 \\
0&0&0&0&0&0 \\
0&0&1&0&0&0 \\
0&0&0&0&0&0 \\
0&0&0&0&1&0 \\
0&0&0&0&0&0
\end{pmatrix},\quad 
q= \begin{pmatrix}
0&0&0&0&0&0 \\
0&1&0&0&0&0 \\
0&0&1&0&0&0 \\
0&0&0&0&0&0 \\
0&0&0&0&a^2&ab \\
0&0&0&0&ba&b^2
\end{pmatrix}. 
\end{gathered}
\]
Since $m(p, q)\neq 0$, by the same discussion as in \cite[Lemma 2.4]{GS2}, there exists a partial isometry $v\in M$ which satisfies $vv^* = p\wedge q^{\perp}$ and $v^*v = p^{\perp} \wedge q$. 
If $p\wedge q^{\perp} = 0$, then the condition $\lVert p-q\rVert = 1$ implies $\lVert b\rVert = 1$. 
As \cite[Lemma 2.9]{GS2}, there exists a projection $p_0\in m(p, q)$ which admits more than one path with the property as in Condition. 
We can also show by \cite[Lemmas 2.8 and 2.9]{GS2} that the following does not happen: $p\wedge q^{\perp} \neq 0$ and $e_1\neq 0$. 
Thus we have $p\wedge q^{\perp} \neq 0$ and $0 = e_1\, (\sim e_2)$. 
Then $p$ and $q$ commutes. 
If there exist subprojections $0 \neq p_1\leq p\wedge q$ and $q_1\leq p^{\perp} \wedge q^{\perp}$ in $M$ which satisfy $p_1 \sim q_1$, 
then we can easily construct more than one path for the projection $p_0 = (p\wedge q^{\perp} + v + v^* +  p^{\perp} \wedge q)/2 + p\wedge q \in m(p, q)$, which contradicts Condition. 
Hence there exists a central projection $r\in M$ with $p\wedge q \leq r^{\perp}$ and $p^{\perp} \wedge q^{\perp} \leq r$. 
It follows $pr \perp qr$ and $p^{\perp}r^{\perp} \perp q^{\perp}r^{\perp}$. 
\end{proof}

We begin the proof of Theorem \ref{thm-G}. 
Let $\P\subset M$ and $\Q\subset N$ be proper Grassmann spaces and suppose $T\colon \P\to\Q$ is a surjective isometry. 
The preceding proposition implies that, for $p, q \in\P$, $p\vartriangle q$ if and only if $T(p)\vartriangle T(q)$. 
\smallskip

By the comparison theorem, there exists a central projection $r_0\in \P(M)$ which satisfies 
$pr_0 \prec p^{\perp}r_0$ and $pr_0^{\perp} \succ p^{\perp}r_0^{\perp}$ for some (and thus every) $p\in\P$. 
We say that a mapping between Grassmann spaces (or between von Neumann algebras) is \emph{typical} 
if it can be written as in the equation in the statement of Theorem \ref{thm-G}.
Since the composition of two typical mappings is also typical, in order to show Theorem \ref{thm-G}, we may and do assume that $p\prec p^{\perp}$ for every $p\in\P$ and $q\prec q^{\perp}$ for every $q\in\Q$.\smallskip

Our next task is to decompose $T$ into two mappings. 
We need preliminaries. 

\begin{lemma}
Let $\P\subset M$ be a proper Grassmann space in a von Neumann algebra $M\subset \B(\H)$ with $p\prec p^{\perp}$ for every $p\in \P$. 
\begin{enumerate}[$(a)$]
\item If $e, f \in \P$ and $e \vartriangle f$, then $m(e, f)$ is isometric to $\U(eMe)/2\,\, (=\{u/2 \mid u\in \U(eMe)\})$.  
\item Suppose $p_1, p_2\in \P$ satisfy $\lVert p_1-p_2\rVert < 1$. Then there exist projections $e, f\in\P$ such that $e\vartriangle f$ and $p_1, p_2\in m(e, f)$. 
\end{enumerate}
\end{lemma}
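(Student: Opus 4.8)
For part $(a)$, the idea is to reduce to the orthogonal situation and read off the answer from the explicit form of $m(e,f)$ obtained while proving the previous proposition. When $e\perp f$, fixing a partial isometry $v\in M$ with $vv^*=e$, $v^*v=f$ and identifying $(e+f)M(e+f)$ with $M_2(eMe)$, one has $m(e,f)=\{\frac12\left(\begin{smallmatrix}1&u\\u^*&1\end{smallmatrix}\right)\mid u\in\U(eMe)\}$; since $\left\lVert\left(\begin{smallmatrix}0&x\\x^*&0\end{smallmatrix}\right)\right\rVert=\lVert x\rVert$, the map $u\mapsto\frac12\left(\begin{smallmatrix}1&u\\u^*&1\end{smallmatrix}\right)$ is an isometry of $\U(eMe)/2$ onto $m(e,f)$. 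For the general relation $e\vartriangle f$ I would pick a central projection $r$ with $er\perp fr$ and $e^{\perp}r^{\perp}\perp f^{\perp}r^{\perp}$. Using the elementary identity $(s-p)^2+(s-q)^2=s+(s-p-q)^2$, valid for a projection $s$ and orthogonal projections $p,q$, one checks that $g\in m(e,f)$ forces $\lVert gr-er\rVert=\lVert gr-fr\rVert=1/\sqrt2$ and likewise on $r^{\perp}$, so $m(e,f)\cong m(er,fr)\times m(er^{\perp},fr^{\perp})$ isometrically (sup–metric). The first factor is $\U(erMer)/2$. For the second, the complementation $p\mapsto r^{\perp}-p$ identifies it isometrically with $m(e^{\perp}r^{\perp},f^{\perp}r^{\perp})\cong\U(e^{\perp}r^{\perp}Me^{\perp}r^{\perp})/2$. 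Finally $e^{\perp}r^{\perp}\perp f^{\perp}r^{\perp}$ gives $e^{\perp}r^{\perp}\leq fr^{\perp}\sim er^{\perp}$, hence $e^{\perp}r^{\perp}\prec er^{\perp}$; combined with the standing hypothesis $er^{\perp}\prec e^{\perp}r^{\perp}$ and the Schr\"oder--Bernstein theorem for von Neumann algebras, $er^{\perp}\sim e^{\perp}r^{\perp}$, whence $\U(e^{\perp}r^{\perp}Me^{\perp}r^{\perp})/2\cong\U(er^{\perp}Mer^{\perp})/2$ and $m(e,f)\cong\U(erMer)/2\times\U(er^{\perp}Mer^{\perp})/2\cong\U(eMe)/2$.

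For part $(b)$, the plan is to put $p_1,p_2$ in Halmos normal form and exhibit $e,f$ by hand. Since $\lVert p_1-p_2\rVert<1$ we have $p_1\wedge p_2^{\perp}=p_1^{\perp}\wedge p_2=0$, so the six–fold decomposition of the two–projections lemma collapses to $\H=(p_1\wedge p_2)\H\oplus(p_1^{\perp}\wedge p_2^{\perp})\H\oplus e_1\H\oplus e_2\H$; there is a partial isometry $v\in M$ with $vv^*=e_1$, $v^*v=e_2$, and commuting injective $a,b\in(e_1Me_1)_+$ with $a^2+b^2=e_1$, such that on the corner $e_1\H\oplus e_2\H\cong M_2(e_1Me_1)$ one has $p_1=\left(\begin{smallmatrix}1&0\\0&0\end{smallmatrix}\right)$, $p_2=\left(\begin{smallmatrix}a^2&ab\\ba&b^2\end{smallmatrix}\right)$, while on the two remaining summands $p_1$ and $p_2$ agree. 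On the corner I would take $e_c:=\frac12\left(\begin{smallmatrix}1&-i\\i&1\end{smallmatrix}\right)$ and $f_c:=\frac12\left(\begin{smallmatrix}1&i\\-i&1\end{smallmatrix}\right)$: these are orthogonal projections with $e_c+f_c=1$, each Murray--von Neumann equivalent to $e_1$, and a short computation using only $a^2+b^2=e_1$ and $ab=ba$ shows that each of the four self-adjoint elements $p_1-e_c$, $p_1-f_c$, $p_2-e_c$, $p_2-f_c$ has square $\tfrac12\cdot1$, so all four distances equal $1/\sqrt2$.

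It remains to handle the two degenerate summands $\H_+:=(p_1\wedge p_2)\H$ and $\H_0:=(p_1^{\perp}\wedge p_2^{\perp})\H$, on which $p_1=p_2$ equals $p_1\wedge p_2$ and $0$ respectively; this is where the hypothesis $p\prec p^{\perp}$ enters. It reads $p_1\wedge p_2+e_1\prec p_1^{\perp}\wedge p_2^{\perp}+e_2$ with $e_1\sim e_2$, so by the comparison theorem there is a central projection $c$ of $M$ with $(p_1\wedge p_2)c\prec(p_1^{\perp}\wedge p_2^{\perp})c$ and $(p_1^{\perp}\wedge p_2^{\perp})c^{\perp}\prec(p_1\wedge p_2)c^{\perp}$. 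On $Mc$, rotating $(p_1\wedge p_2)c$ into a subprojection of $(p_1^{\perp}\wedge p_2^{\perp})c$ and applying the same $\mp i$ trick inside the resulting $2\times2$ corner produces orthogonal projections $e_0c,f_0c$ equivalent to $(p_1\wedge p_2)c$ with $(p_1\wedge p_2)c\in m(e_0c,f_0c)$; on $Mc^{\perp}$ one does the symmetric thing with $\H_+$ and $\H_0$ interchanged, getting $e_0c^{\perp},f_0c^{\perp}$ with $e_0^{\perp}c^{\perp}\perp f_0^{\perp}c^{\perp}$. Put $e:=e_0+e_c$, $f:=f_0+f_c$; these are block diagonal with respect to $(\H_+\oplus\H_0)\oplus(e_1\H\oplus e_2\H)$, so $\lVert p_i-e\rVert$ and $\lVert p_i-f\rVert$ are the maxima of the norms of their two blocks, hence $1/\sqrt2$ for $i=1,2$, and $e,f\sim p_1$ so $e,f\in\P$. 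Finally $e\vartriangle f$: on $c$ one gets $ec\perp fc$ (both blocks orthogonal), and on $c^{\perp}$ one gets $e^{\perp}c^{\perp}\perp f^{\perp}c^{\perp}$ (both blocks have orthogonal complements, using $e_c+f_c=1$ on the corner block). Thus $p_1,p_2\in m(e,f)$ and $e\vartriangle f$. I expect the main obstacle to be exactly this last part: arranging the degenerate summands so that simultaneously $e_0,f_0$ are equivalent to $p_1\wedge p_2$, lie at distance $1/\sqrt2$ from it, and combine with the corner to make $e$ and $f$ orthogonal on $c$ and co-orthogonal on $c^{\perp}$ — i.e.\ keeping the relation $e\vartriangle f$ coherent across the central splitting forced by the comparison of $p_1\wedge p_2$ and $p_1^{\perp}\wedge p_2^{\perp}$, since these need not be comparable nor small.
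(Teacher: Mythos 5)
Your proof is correct and follows essentially the same route as the paper: for $(a)$ the reduction along the central projection implementing $\vartriangle$ together with the Schr\"oder--Bernstein identification of $e^{\perp}r^{\perp}$ with $er^{\perp}$, and for $(b)$ the four-block Halmos form, the $\pm i$ rotation on the generic corner, and the comparison-theorem central splitting of the two degenerate summands with explicit orthogonal (resp.\ co-orthogonal) $e,f$. The only step stated more loosely than needed is the membership $e,f\in\P$, which requires unitary equivalence to $p_1$ (so one must also note $e^{\perp}\sim p_1^{\perp}$); your explicit construction does deliver this.
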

\begin{proof}
$(a)$ It suffices to consider two cases: $e\perp f$ or $e^{\perp}\perp f^{\perp}$. 
In the former case, there exists a partial isometry $v\in M$ such that $vv^*= e$ and $v^*v= f$. 
It follows $m(e, f) = \{(e + uv + v^*u^* + f)/2 \mid u\in \U(eMe)\}$, which is isometric to $\U(eMe)/2$. 
In the latter case, we similarly obtain that $m(e, f)$ is isometric to $\U(e^{\perp}Me^{\perp})/2$. 
In addition, we have $e \prec e^{\perp} \leq f \sim e$, thus $\U(e^{\perp}Me^{\perp})$ is isometric to $\U(eMe)$.\medskip

\noindent
$(b)$ By the generalization of Halmos' theorem, we can consider $p_1$ and $p_2$ as 
\[p_1= \begin{pmatrix}
1&0&0&0\\
0&0&0&0\\
0&0&1&0\\
0&0&0&0
\end{pmatrix},\quad 
p_2= \begin{pmatrix}
1&0&0&0\\
0&0&0&0\\
0&0&a^2&ab\\
0&0&ba&b^2
\end{pmatrix}
\]
through the decomposition $
\H =  (p\wedge q) \H \oplus (p^{\perp}\wedge q^{\perp}) \H \oplus e_1\H \oplus e_2\H$. 
By the comparison theorem, we may assume $p\wedge q \prec p^{\perp}\wedge q^{\perp}$ or $p^{\perp}\wedge q^{\perp} \prec p\wedge q$. 
In the former case, take a partial isometry $v\in M$ with $vv^* = p\wedge q$ and $v^*v \leq p^{\perp}\wedge q^{\perp}$. 
Put 
\[
e := \frac{1}{2}\begin{pmatrix}
1&v&0&0\\
v^*&v^*v&0&0\\
0&0&1&i\\
0&0&-i&1
\end{pmatrix}, \quad 
f := \frac{1}{2}\begin{pmatrix}
1&-v&0&0\\
-v^*&v^*v&0&0\\
0&0&1&-i\\
0&0&i&1
\end{pmatrix}. 
\] 
Then it is not difficult to see $e \perp f$ and $p_1, p_2\in m(e, f)$. 
The latter case can be proved similarly. 
\end{proof}

In addition, we recall Hatori and Moln\'ar's theorem. 
We remark that every Jordan $^*$-isomorphism between von Neumann algebras decomposes to the direct sum of a $^*$-isomorphism and a $^*$-antiisomorphism \cite[Exercise 10.5.26]{KR}.   
\begin{theorem}[Hatori and Moln\'ar, {\cite[Corollary 3]{HM}}]
Let $M$ and $N$ be von Neumann algebras. 
Suppose that $\tau\colon \U(M)\to \U(N)$ is a surjective isometry. 
Then there exist a central projection $e\in \P(N)$ and a Jordan $^*$-isomorphism $j\colon M\to N$ which satisfy 
$\tau(u) = \tau(1)(j(u)e + j(u)^*e^{\perp})$, $u\in \U(M)$. 
\end{theorem}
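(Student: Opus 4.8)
The result is the theorem of Hatori and Moln\'ar recalled from \cite{HM}, so I only sketch how one proves it. The plan is to reduce it to a real-linear analogue of Kadison's theorem: \emph{every real-linear surjective isometry $\Phi\colon M\to N$ with $\Phi(1)=1$ has the form $\Phi(x)=j(x)e+j(x)^*e^{\perp}$ for some central projection $e\in\P(N)$ and some Jordan $^*$-isomorphism $j\colon M\to N$} (on the summand $Ne$ the map $x\mapsto\Phi(x)e$ is complex linear, on $Ne^{\perp}$ the map $x\mapsto\Phi(x)^*e^{\perp}$ is complex linear). Granting both this and the claim that $\tau$ extends to such a $\Phi$ after normalisation, one simply restricts $\Phi$ to $\U(M)$ and reinserts the factor $\tau(1)$ to obtain the asserted formula.

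First I would normalise. Left multiplication by $\tau(1)^*$ is a surjective isometry of $\U(N)$, so $u\mapsto\tau(1)^*\tau(u)$ is again a surjective isometry $\U(M)\to\U(N)$ and it fixes $1$; hence we may assume $\tau(1)=1$. It then remains to produce a real-linear surjective isometry $\Phi\colon M\to N$ with $\Phi(1)=1$ extending $\tau$.

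For the construction I would transfer the problem to the self-adjoint parts through the exponential. A direct spectral computation shows that for $a=a^*\in M$ with $\lVert a\rVert=1$ one has $\lVert e^{i\theta_1 a}-e^{i\theta_2 a}\rVert=2\sin\frac{\lvert\theta_1-\theta_2\rvert}{2}$ for $\theta_1,\theta_2\in[0,\pi]$, so that, up to reparametrisation, each curve $\theta\mapsto e^{i\theta a}$ is a metric geodesic of $\U(M)$ through $1$, and every unitary lies on such a curve. Exactly as in the analysis of $m(p,q)$ and its geodesics earlier in this section (and as in Geh\'er--\v{S}emrl for $\B(\H)$), one shows that these are the only metric geodesics through $1$, so $\tau$ carries each $\theta\mapsto e^{i\theta a}$ onto a curve of the same form $\theta\mapsto e^{i\theta b}$ in $\U(N)$; this yields a positively homogeneous bijection $\Phi\colon M_{\mathrm{sa}}\to N_{\mathrm{sa}}$ with $\Phi(0)=0$, $\Phi(1)=1$ and $\tau(e^{ia})=e^{i\Phi(a)}$. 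To upgrade $\Phi$ to a real-linear isometry one invokes the Mazur--Ulam-type rigidity available for the generalised gyrovector space structure carried by $\U(M)$ (Hatori--Moln\'ar): a surjective isometry fixing $1$ must respect the associated midpoint/gyro-operations, and translating these back through the exponential chart forces $\Phi$ to be additive (first near $0$, hence everywhere by homogeneity) and isometric; complexifying yields the desired $\Phi\colon M\to N$.

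Finally, the real-linear Kadison-type structure theorem quoted above produces the central projection $e\in\P(N)$ and the Jordan $^*$-isomorphism $j\colon M\to N$ with $\Phi(x)=j(x)e+j(x)^*e^{\perp}$; restriction to $\U(M)$ and reinsertion of the factor $\tau(1)$ complete the proof. I expect the middle step to be the main obstacle: both the uniqueness of metric geodesics through $1$ in $\U(M)$ and the additivity of $\Phi$ are global statements, and for a general von Neumann algebra — as opposed to $\B(\H)$ — they rest on the comparison theory of projections together with the gyrovector-space Mazur--Ulam machinery rather than on Hilbert-space geometry; this is precisely the content of Hatori and Moln\'ar's argument.
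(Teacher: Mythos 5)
The first thing to say is that the paper offers no proof of this statement at all: it is imported verbatim as \cite[Corollary 3]{HM} and used as a black box, so there is nothing internal to compare your argument against. Judged on its own, your sketch does capture the correct overall architecture of Hatori and Moln\'ar's proof: normalise by left multiplication with $\tau(1)^*$ so that $\tau(1)=1$, extend the normalised map to a unital real-linear surjective isometry $\Phi\colon M\to N$, and then invoke the real-linear version of Kadison's theorem to split $\Phi$ by a central projection $e$ into a complex-linear and a conjugate-linear Jordan $^*$-isomorphism. The normalisation and the final structure theorem are fine.

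The genuine gap is in the middle step. Your mechanism for producing $\Phi$ rests on the claim that the curves $\theta\mapsto e^{i\theta a}$, $a=a^*$, $\lVert a\rVert=1$, are the \emph{only} metric geodesics of $\U(M)$ through $1$. This is false, already for $M=\C\oplus\C$ (hence also inside any von Neumann algebra containing two orthogonal central or even non-central projections, including $\B(\H)$ with $\dim\H\ge 2$): if $f\colon[0,\pi]\to[-\pi,\pi]$ is $1$-Lipschitz with $f(0)=0$ but nonlinear, then
\[
\bigl\lVert (e^{i\theta_1},e^{if(\theta_1)})-(e^{i\theta_2},e^{if(\theta_2)})\bigr\rVert
=\max\Bigl(2\sin\tfrac{\lvert\theta_1-\theta_2\rvert}{2},\,2\bigl\lvert\sin\tfrac{f(\theta_1)-f(\theta_2)}{2}\bigr\rvert\Bigr)
=2\sin\tfrac{\lvert\theta_1-\theta_2\rvert}{2},
\]
so $\theta\mapsto(e^{i\theta},e^{if(\theta)})$ is a metric geodesic through $1$ that is not of exponential form. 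The supremum norm simply does not see the second coordinate, so geodesics through a single point are massively non-unique; the rigidity exploited for $m(p,q)$ in Section \ref{Grassmann} comes from prescribing \emph{both} endpoints at distance $1$ together with a midpoint, and has no analogue here. Hatori and Moln\'ar's actual route avoids geodesics entirely: a Mazur--Ulam-type argument (their ``generalised gyrovector space'' framework, using the isometric point reflections $u\mapsto vu^*v$) shows that $\tau$ preserves the inverted Jordan triple product, $\tau(vu^*v)=\tau(v)\tau(u)^*\tau(v)$; with $\tau(1)=1$ this gives $\tau(u^*)=\tau(u)^*$ and $\tau(u^2)=\tau(u)^2$, from which one builds the one-parameter groups $\tau(e^{ita})=e^{it\phi(a)}$ and the real-linear extension. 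So the endpoints of your argument are right, but the bridge you propose between them does not carry weight; you would need to replace the geodesic-uniqueness claim by the triple-product argument.
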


We return to the proof of Theorem \ref{thm-G}. 
There exists a unique central projection $r_1\in\P(M)$ which satisfies   
$pr_1$ is a finite projection and $pr_1^{\perp}$ is a properly infinite projection in $M$ for every $p\in\P$.
We define $\P_{\mathrm{fin}} := \{pr_1\mid p\in\P\}$ and $\P_{\mathrm{infin}} := \{pr_1^{\perp}\mid p\in\P\}$. 
Note that, if $r_1\neq0$ (resp.\ $r_1\neq 1$), $\P_{\mathrm{fin}}$ (resp.\ $\P_{\mathrm{infin}}$) is a proper Grassmann space in $Mr_1$ (resp.\ $Mr_1^{\perp}$)  and every projection in $\P_{\mathrm{fin}}$ (resp.\ $\P_{\mathrm{infin}}$) is a finite (resp.\ properly infinite) projection. 

\begin{lemma}
There exist surjective isometries 
$T_{\mathrm{fin}}\colon \P_{\mathrm{fin}} \to \Q_{\mathrm{fin}}$ and 
$T_{\mathrm{infin}}\colon \P_{\mathrm{infin}} \to \Q_{\mathrm{infin}}$
which are uniquely determined by the equation 
\[
T(p) = T_{\mathrm{fin}}(pr_1) + T_{\mathrm{infin}}(pr_1^{\perp}),\quad p\in\P. 
\]
\end{lemma}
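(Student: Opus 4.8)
The plan is to show that $T$ respects the central decomposition of $N$ into its finite and properly infinite parts. Let $s_1\in\P(N)$ be the central projection for which $qs_1$ is finite and $qs_1^{\perp}$ is properly infinite for every $q\in\Q$, so that $\Q_{\mathrm{fin}}=\{qs_1\mid q\in\Q\}$ and $\Q_{\mathrm{infin}}=\{qs_1^{\perp}\mid q\in\Q\}$. I will prove that for $p\in\P$ the projection $T(p)s_1$ depends only on $pr_1$ and $T(p)s_1^{\perp}$ only on $pr_1^{\perp}$. Granting this, the definitions $T_{\mathrm{fin}}(pr_1):=T(p)s_1$ and $T_{\mathrm{infin}}(pr_1^{\perp}):=T(p)s_1^{\perp}$ make sense, the identity $T(p)=T(p)s_1+T(p)s_1^{\perp}$ gives the displayed formula, and surjectivity of $T_{\mathrm{fin}}$ and $T_{\mathrm{infin}}$ follows from that of $T$ (given $q\in\Q$, write $q=T(p)$, so $qs_1=T_{\mathrm{fin}}(pr_1)$). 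Uniqueness is immediate: in any decomposition $T(p)=S(pr_1)+S'(pr_1^{\perp})$ with $S(pr_1)\in\Q_{\mathrm{fin}}$ and $S'(pr_1^{\perp})\in\Q_{\mathrm{infin}}$, multiplying by $s_1$ forces $S(pr_1)=T(p)s_1$, and likewise $S'(pr_1^{\perp})=T(p)s_1^{\perp}$.

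The main work is the local step: if $p,p'\in\P$ satisfy $pr_1=p'r_1$ and $\|p-p'\|<1$, then $T(p)s_1=T(p')s_1$. By part $(b)$ of the lemma above with parts $(a)$, $(b)$, there are $e,f\in\P$ with $e\vartriangle f$ and $p,p'\in m(e,f)$. Since $m(e,f)=\{x\in\P\mid\|x-e\|=\|x-f\|=1/\sqrt{2}\}$ is defined purely metrically and $T$ is a surjective isometry, $T$ maps $m(e,f)$ onto $m(T(e),T(f))$, and $T(e)\vartriangle T(f)$ by the proposition above. By part $(a)$ of that lemma (applied on both sides, using that $\P$, $\Q$ are proper with $p\prec p^{\perp}$, $q\prec q^{\perp}$), $m(e,f)$ is isometric to $\U(eMe)/2$ and $m(T(e),T(f))$ to $\U(T(e)NT(e))/2$; hence $T$ induces a surjective isometry $\U(eMe)\to\U(T(e)NT(e))$, which by the Hatori--Moln\'ar theorem has the form $u\mapsto w(j(u)c+j(u)^{*}c^{\perp})$ for a Jordan $^*$-isomorphism $j\colon eMe\to T(e)NT(e)$, a central projection $c$ and a unitary $w$. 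Because $z(e)=1=z(e^{\perp})$, the finite-part central projection of $eMe$ is $r_1e$ (as $er_1$ is finite and $er_1^{\perp}$ properly infinite in $M$), and similarly that of $T(e)NT(e)$ is $s_1T(e)$; since a Jordan $^*$-isomorphism carries the finite summand onto the finite summand, $j(r_1e)=s_1T(e)$.

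To finish the local step I transport this back through the parametrizations in the proof of part $(a)$: splitting off a central projection we may assume $e\perp f$ (the case $e^{\perp}\perp f^{\perp}$ being analogous), fix a partial isometry $v$ with $vv^{*}=e$, $v^{*}v=f$, and write $m(e,f)=\{(e+uv+v^{*}u^{*}+f)/2\mid u\in\U(eMe)\}$, with the analogous description of $m(T(e),T(f))$. A direct check shows these parametrizations intertwine multiplication by $r_1$ on $m(e,f)$ (resp.\ by $s_1$ on $m(T(e),T(f))$) with the cut by the central projection $r_1e$ of $\U(eMe)$ (resp.\ $s_1T(e)$ of $\U(T(e)NT(e))$); combined with $j(r_1e)=s_1T(e)$ and the centrality of $c$, this gives that $T(x)s_1$ depends only on $xr_1$ for $x\in m(e,f)$, so in particular $T(p)s_1=T(p')s_1$. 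For arbitrary $p,p'\in\P$ with $pr_1=p'r_1$, observe that $q\mapsto qr_1^{\perp}$ carries $\{q\in\P\mid qr_1=pr_1\}$ isometrically onto the Grassmann space $\P_{\mathrm{infin}}$, which is path-connected; picking a path from $p$ to $p'$ in this set and subdividing it so consecutive points lie at distance $<1$, the local step gives $T(p)s_1=T(p')s_1$. Symmetrically $T(p)s_1^{\perp}$ depends only on $pr_1^{\perp}$, so $T_{\mathrm{fin}}$ and $T_{\mathrm{infin}}$ are well defined. Finally, to see $T_{\mathrm{fin}}$ is an isometry, given $a=p_1r_1$ and $b=p_2r_1$ in $\P_{\mathrm{fin}}$, conjugate $p_1r_1$ to $p_2r_1$ by a unitary of $Mr_1$, extend it by $r_1^{\perp}$ to a unitary of $M$, and set $p_2':=p_2r_1+p_1r_1^{\perp}\in\P$; then $p_1$ and $p_2'$ have the same $r_1^{\perp}$-part while $T(p_1)$ and $T(p_2')$ have the same $s_1^{\perp}$-part, so $\|a-b\|=\|p_1-p_2'\|=\|T(p_1)-T(p_2')\|=\|T_{\mathrm{fin}}(a)-T_{\mathrm{fin}}(b)\|$; the argument for $T_{\mathrm{infin}}$ is identical with $r_1,s_1$ replaced by $r_1^{\perp},s_1^{\perp}$.

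The step I expect to be the main obstacle is the transport in the third paragraph: one must verify that the several identifications involved -- $m(e,f)\cong\U(eMe)/2$ together with its $N$-side analogue, the reduction of the relation $\vartriangle$ to orthogonality, and the cuts by $r_1$ and $s_1$ -- are mutually compatible in such a way that the Hatori--Moln\'ar decomposition of the induced unitary-group isometry forces $T(\cdot)s_1$ to factor through $p\mapsto pr_1$. Everything else is either formal bookkeeping or a straightforward application of the results quoted above.
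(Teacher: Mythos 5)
Your proposal is correct and follows essentially the same route as the paper: reduce by path-connectedness to the case $\lVert p-p'\rVert<1$, place $p,p'$ in a common $m(e,f)$ via the preceding lemma, transfer to a surjective isometry $\U(eMe)\to\U(T(e)NT(e))$, and invoke Hatori--Moln\'ar together with the fact that Jordan $^*$-isomorphisms preserve finiteness, so that the induced map splits along the central projections $r_1e$ and $s_1T(e)$. The paper leaves the transport back through the parametrization and the isometry of $T_{\mathrm{fin}}$, $T_{\mathrm{infin}}$ as ``easy to see''; your elaborations of these points are accurate but do not change the argument.
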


\begin{proof}
Take the central projection $r_2\in\P(N)$ such that $\Q_{\mathrm{fin}} = \{qr_2\mid q\in\Q\}$ and $\Q_{\mathrm{infin}} = \{qr_2^{\perp}\mid q\in\Q\}$. 
Let $p_1, p_2 \in\P$. What we have to show are the following:
\begin{enumerate}[$(a)$]
\item If $p_1r_1 = p_2r_1$, then $T(p_1)r_2 = T(p_2)r_2$.
\item If $p_1r_1^{\perp} = p_2r_1^{\perp}$, then $T(p_1)r_2^{\perp} = T(p_2)r_2^{\perp}$.
\end{enumerate}
We show $(a)$ and $(b)$ at the same time. 
Since every Grassmann space is path-connected, it suffices to show them in the case $\lVert p_1-p_2\rVert < 1$. 
In this case, take projections $e, f$ as in the proof of the preceding lemma. 
It follows $e \vartriangle f$, $p_1, p_2 \in m(e, f)$ and thus $T(e) \vartriangle T(f)$, $T(p_1), T(p_2) \in m(T(e), T(f))$.  
Then $T$ restricts to a bijection from $m(e, f)$ onto $m(T(e), T(f))$. 
By $(a)$ of the preceding lemma, it determines a surjective isometry $T_1$ from $\U(eMe)$ onto $\U(T(e)NT(e))$. 
Then we can apply the theorem due to Hatori and Moln\'ar. 
By the fact that every Jordan $^*$-isomorphism between two von Neumann algebras preserves finite (properly infinite) projections, 
it follows that $T_1$ is decomposed to the direct sum of two surjective isometries
$T_2\colon \U(r_1eMe) \to \U(r_2T(e)NT(e))$ and $T_3\colon \U(r_1^{\perp}eMe) \to \U(r_2^{\perp}T(e)NT(e))$. 
Now it is easy to see that $(a)$ and $(b)$ hold. 
\end{proof}
 
We say $\P$ is \emph{finite} if every $p\in \P$ is finite, and $\P$ is \emph{properly infinite} if every $p\in \P$ is properly infinite.
By the preceding lemma, what we have to do is to prove Theorem \ref{thm-G} in the case both $\P$ and $\Q$ are finite, or both $\P$ and $\Q$ are properly infinite. 
\smallskip

First we consider the case $\P$ and $\Q$ are finite. 
Thus the setting is as follows: 
Let $\P\subset M$ and $\Q\subset N$ be finite proper Grassmann spaces. 
Assume $p\prec p^{\perp}$ for every $p\in\P$ and $q\prec q^{\perp}$ for every $q\in\Q$. 
Suppose that $T\colon \P\to\Q$ is a surjective isometry. 
 
A key to the proof is the following lemma. 

\begin{lemma}\label{lem}
In the above setting, suppose $p_1, p_2 \in \P$ are mutually orthogonal elements. 
By our assumption, we have $T(p_1) \perp T(p_2)$. 
Then, $T$ restricts to a bijection $T_0\colon \{p\in \P\mid p\leq p_1+p_2\} \to \{q\in \Q\mid q\leq T(p_1)+T(p_2)\}$. 
Moreover, $T_0$ extends to a typical mapping from $(p_1+p_2)M(p_1+p_2)$ onto $(T(p_1)+T(p_2))N(T(p_1)+T(p_2))$.
\end{lemma}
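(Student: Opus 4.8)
The plan is to pass to the $2\times 2$ matrix picture of the two corners, apply the Hatori--Moln\'ar theorem to the unitary group that appears as $m(p_1,p_2)$, extend the resulting Jordan $^*$-isomorphism, and then propagate its agreement with $T$ over all of $\{p\in\P\mid p\le p_1+p_2\}$ by means of the sets $m(\cdot,\cdot)$.

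First fix a partial isometry $w\in M$ with $ww^*=p_1$, $w^*w=p_2$ (possible since $p_1\sim p_2$) and, likewise, $w'\in N$ with $w'w'^*=T(p_1)$, $w'^*w'=T(p_2)$; these identify $(p_1+p_2)M(p_1+p_2)$ with $M_2(p_1Mp_1)$ and $(T(p_1)+T(p_2))N(T(p_1)+T(p_2))$ with $M_2(T(p_1)NT(p_1))$, carry $p_1,p_2$ to $\operatorname{diag}(1,0),\operatorname{diag}(0,1)$, and give
\[
m(p_1,p_2)=\Bigl\{\tfrac12\begin{pmatrix}1&u\\u^*&1\end{pmatrix}\ \Big|\ u\in\U(p_1Mp_1)\Bigr\},
\]
and analogously for $m(T(p_1),T(p_2))$. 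Because $m(\cdot,\cdot)$ is defined purely metrically and $\lVert p_1-p_2\rVert=1=\lVert T(p_1)-T(p_2)\rVert$, the map $T$ restricts to a bijection from $m(p_1,p_2)$ onto $m(T(p_1),T(p_2))$; through the above parametrizations (under which the metrics correspond up to the factor $1/2$) this is a surjective isometry of $\U(p_1Mp_1)$ onto $\U(T(p_1)NT(p_1))$. By the Hatori--Moln\'ar theorem there are a central projection and a Jordan $^*$-isomorphism $j\colon p_1Mp_1\to T(p_1)NT(p_1)$ describing it. Extending $j$ suitably (on one central summand by its $^*$-isomorphism extension, on the other by its $^*$-anti-isomorphism extension, both of which fix $\operatorname{diag}(1,0)$ and $\operatorname{diag}(0,1)$) and composing with a suitable inner automorphism, one obtains a Jordan $^*$-isomorphism $\Psi\colon(p_1+p_2)M(p_1+p_2)\to(T(p_1)+T(p_2))N(T(p_1)+T(p_2))$ — in particular a typical mapping — with $\Psi(p_1)=T(p_1)$, $\Psi(p_2)=T(p_2)$, and $\Psi=T$ on $m(p_1,p_2)$.

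The heart of the matter is to show $T(p)=\Psi(p)$ for every $p\in\P$ with $p\le p_1+p_2$. This gives at once that $T$ restricts to a bijection $T_0$ onto $\{q\in\Q\mid q\le T(p_1)+T(p_2)\}$ — apply the same argument to $T^{-1}$ and $\Psi^{-1}$ — and that $T_0$ extends to the typical mapping $\Psi$. The key point is that every such $p$ belongs to $m(e,f)$ for suitable $e,f\in m(p_1,p_2)$ with $e\perp f$ and $p_1,p_2\in m(e,f)$. Indeed, when $p$ is in general position with $p_1$ the generalized Halmos theorem lets us write
\[
p=\begin{pmatrix}\cos^2\Theta & \cos\Theta\sin\Theta\,u^*\\ u\cos\Theta\sin\Theta & u\sin^2\Theta\,u^*\end{pmatrix}
\]
with $\Theta\in(p_1Mp_1)_+$ and $u\in\U(p_1Mp_1)$, and a direct computation shows that $e:=\frac12\begin{pmatrix}1&iu^*\\-iu&1\end{pmatrix}$ and $f:=\frac12\begin{pmatrix}1&-iu^*\\ iu&1\end{pmatrix}$ work; the remaining projections are reduced to this case by splitting off the parts of $p$ meeting $p_1$ or $p_1^{\perp}$. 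Granting this, both $T$ and $\Psi$ restrict to surjective isometries of $m(e,f)$ onto $m(T(e),T(f))=m(\Psi(e),\Psi(f))$ (recall $T$ and $\Psi$ agree on $\{e,f\}\subset m(p_1,p_2)$), and these two isometries agree on $\{p_1,p_2\}$ and on $m(e,f)\cap m(p_1,p_2)$; a rigidity argument — one more use of the Hatori--Moln\'ar theorem, in the spirit of the corresponding step of \cite{GS2} — shows that a surjective isometry of $m(e,f)$ fixing this subset pointwise is the identity, whence $T(p)=\Psi(p)$.

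I expect this last step to be the principal difficulty. Unlike in the type I factor setting, $\{p\in\P\mid p\le p_1+p_2\}$ is in general strictly larger than the union of the metric geodesics joining $p_1$ and $p_2$ — for instance a projection whose ``angle operator'' $\Theta$ relative to $p_1$ is non-scalar lies on no such geodesic — so the whole Grassmann space inside the corner cannot be reached by the one-parameter geodesic picture alone. Establishing the covering of $\{p\le p_1+p_2\}$ by the sets $m(e,f)$ with $e,f\in m(p_1,p_2)$ (including the non-generic projections), verifying that $T$ does not leak out of the corner, and running the Hatori--Moln\'ar rigidity argument in the von Neumann algebra rather than Hilbert space framework is where the real work, and the departure from \cite{GS2}, lies.
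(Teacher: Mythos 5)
Your route is the paper's route: pass to $M_2(p_1Mp_1)$, apply the Hatori--Moln\'ar theorem to the surjective isometry of $\U(p_1Mp_1)$ induced by $T|_{m(p_1,p_2)}$, extend to the whole corner, cover $\{p\in\P\mid p\leq p_1+p_2\}$ by sets $m(e,f)$ with $e,f$ orthogonal elements of $m(p_1,p_2)$ (your $e$ and $f$ are exactly the paper's), and finish with a Hatori--Moln\'ar rigidity argument. The two steps you flag as ``the real work'' are carried out in the paper essentially as you predict: the covering uses the \emph{finiteness} of $\P$ (which you never invoke, although it is part of ``the above setting'') to write $p=\begin{pmatrix}a^2&abw\\ w^*ba&w^*b^2w\end{pmatrix}$ with $w$ a genuine unitary of $p_1Mp_1$ --- precisely the point that fails for properly infinite Grassmann spaces and forces the separate, much longer argument of the next lemma --- and the rigidity is obtained by conjugating $m(e,f)$ into standard position by the self-adjoint unitary $U=\frac{1}{\sqrt2}\begin{pmatrix}1&iw\\-iw^*&-1\end{pmatrix}$ and observing that the resulting self-isometry of $\U(p_1Mp_1)$ fixes $iw$ and $aw$ for every self-adjoint unitary $a$, which by Hatori--Moln\'ar forces it to be the identity.

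The one genuine error is your claim that the extension $\Psi$ can be taken to be a Jordan $^*$-isomorphism. The Hatori--Moln\'ar description $\tau(u)=\tau(1)(j(u)e+j(u)^*e^{\perp})$ yields, after splitting $j$ into its $^*$-isomorphism and $^*$-anti-isomorphism parts, four central components of $\tau$: linear multiplicative, linear antimultiplicative, conjugate-linear multiplicative and conjugate-linear antimultiplicative. Only the linear multiplicative and the conjugate-linear multiplicative components arise by restricting a Jordan $^*$-isomorphism of the $2\times2$ corner fixing $\operatorname{diag}(1,0)$ and $\operatorname{diag}(0,1)$; the other two can only be realized by additionally composing with the orthocomplementation $q\mapsto (p_1+p_2)-q$ on the corresponding central summand. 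This is exactly why the paper's extension $\widetilde{T}$ carries the twist $r'_2-\varphi_2(wr_2)$ on the diagonal of the anti-isomorphism blocks, and why the lemma asserts only a \emph{typical} mapping; upgrading it to a Jordan $^*$-isomorphism requires the further argument with the central projections $r_a,r_b,r_c$ and orthogonality preservation that the paper gives only after the lemma. The error is harmless for the lemma as stated, because your subsequent reasoning uses only that $\Psi$ is a bijective isometry on projections agreeing with $T$ on $\{p_1,p_2\}\cup m(p_1,p_2)$ --- a property typical mappings have --- but the assertion itself must be weakened to ``typical mapping''.
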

\begin{proof}
Since $p_1 \sim p_2$, using the way as before, we can identify: $p_1=
\begin{pmatrix}
1 & 0\\
0 & 0
\end{pmatrix}$, 
$p_2 = 
\begin{pmatrix}
0 & 0\\
0 & 1
\end{pmatrix}$ and
\[ 
\begin{split}
m(p_1, p_2) &= 
\left\{\frac{1}{2}
\begin{pmatrix}
1 & u\\
u^* & 1
\end{pmatrix}
\middle| u\in \U(p_1Mp_1)
\right\} \\
&\subset M_2(p_1Mp_1) = (p_1+p_2)M(p_1+p_2) \subset M.
\end{split}
\]

Similarly, we identify $(T(p_1)+T(p_2))N(T(p_1)+T(p_2))$ with $M_2(T(p_1)NT(p_1))$. 
We may assume 
\[
T\left(\frac{1}{2}\begin{pmatrix}
1 & 1\\
1 & 1
\end{pmatrix}\right)
= \frac{1}{2}\begin{pmatrix}
1 & 1\\
1 & 1
\end{pmatrix}.
\]
Consider the restriction of $T$ to $m(p_1, p_2)$ and define a surjective isometry $\tau\colon \U(p_1Mp_1) \to \U(T(p_1)NT(p_1))$ by 
\[
T\left(\frac{1}{2}
\begin{pmatrix}
1 & u\\
u^* & 1
\end{pmatrix}
\right) = 
\frac{1}{2}
\begin{pmatrix}
1 & \tau(u)\\
\tau(u)^* & 1
\end{pmatrix},\quad u\in \U(p_1Mp_1). 
\]
By the theorem due to Hatori and Moln\'ar, there exist central projections $r_1, r_2, r_3, r_4\in \P(p_1Mp_1)$ and $r'_1, r'_2, r'_3, r'_4 \in \P(T(p_1)NT(p_1))$ which satisfy
\[
r_1+r_2+r_3+r_4 = p_1,\quad r'_1+r'_2+r'_3+r'_4 = T(p_1)
\]
and a $^*$-isomorphism $\varphi_1\colon Mr_1 \to Nr'_1$, 
a $^*$-antiisomorphism $\varphi_2\colon Mr_2\to Nr'_2$,  
a conjugate linear $^*$-isomorphism $\varphi_3\colon Mr_3\to Nr'_3$, 
a conjugate linear $^*$-antiisomorphism $\varphi_4\colon Mr_4\to Nr'_4$ such that  
$\tau(u) = \varphi_1(ur_1) + \varphi_2(ur_2) +\varphi_3(ur_3) +\varphi_4(ur_4)$, $u\in  \U(p_1Mp_1)$. 
We define a typical mapping $\widetilde{T}$ from $M_2(p_1Mp_1)$ onto $M_2(T(p_1)NT(p_1))$ by 
\[
\begin{split}
\widetilde{T}
\begin{pmatrix}
x & y\\
z & w
\end{pmatrix}
:=
& \begin{pmatrix}
\varphi_1(xr_1) & \varphi_1(yr_1)\\
\varphi_1(zr_1) & \varphi_1(wr_1) 
\end{pmatrix} +
\begin{pmatrix}
r'_2 - \varphi_2(wr_2) & \varphi_2(yr_2)\\
\varphi_2(zr_2) & r'_2 - \varphi_2(xr_2) 
\end{pmatrix}\\
& +
\begin{pmatrix}
\varphi_3(xr_3) & \varphi_3(yr_3)\\
\varphi_3(zr_3) & \varphi_3(wr_3) 
\end{pmatrix}^* +
\begin{pmatrix}
r'_4 - \varphi_4(wr_4) & \varphi_4(yr_4)\\
\varphi_4(zr_4) & r'_4 - \varphi_4(xr_4) 
\end{pmatrix}^*, 
\end{split}
\]
$x, y, z, w\in p_1Mp_1$.
We show that this is an extension of $T_0$. 

Let $p$ be an element in $\P$ with the property $p\leq p_1+p_2$. 
By the finiteness of $\P$, there exist positive elements $a, b\in p_1Mp_1$ and a unitary $w\in \U(p_1Mp_1)$ with the property 
\[
a^2+b^2=p_1,\quad p= \begin{pmatrix}
a^2 & abw\\
w^*ba & w^*b^2w
\end{pmatrix}.
\]
Then $p$ is an element of 
\[
m:=
m\left( \frac{1}{2}\begin{pmatrix}
1 & iw\\
-iw^* & 1
\end{pmatrix}, \frac{1}{2}\begin{pmatrix}
1 & -iw\\
iw^* & 1
\end{pmatrix}\right), 
\]
so it follows $T(p)\leq T(p_1)+ T(p_2)$. 
We have to show that the mapping $\Phi$ from $\{p\in\P\mid p\leq p_1+p_2\}$ onto itself 
which is defined by $\Phi(p) = \widetilde{T}^{-1} \circ T (p)$ is the identity mapping. 
We already know that the projections 
\[
\begin{pmatrix}
1 & 0\\
0 & 0
\end{pmatrix},\quad
\begin{pmatrix}
0 & 0\\
0 & 1
\end{pmatrix}
\quad
\text{and}\quad
\frac{1}{2}\begin{pmatrix}
1 & u\\
u^* & 1
\end{pmatrix},\,\,
u\in\U(p_1Mp_1)
\]
are all fixed under $\Phi$.

It follows $\Phi$ restricts to a bijection from $m$ (as above) onto itself. 
It suffices to show that $\Phi$ restricts to the identity mapping on $m$.  
The self-adjoint unitary 
$U:= \displaystyle \frac{1}{\sqrt{2}}\begin{pmatrix}
1 & iw\\
-iw^* & -1
\end{pmatrix}$ 
gives rise to an isometry $\operatorname{Ad}(U)$ on $M_2(p_1Mp_1)$. 
Then $m$ is isometric to
\[
\begin{split}
\operatorname{Ad}(U) m &= \frac{1}{\sqrt{2}}\begin{pmatrix}
1 & iw\\
-iw^* & -1
\end{pmatrix}
m\left( \frac{1}{2}\begin{pmatrix}
1 & iw\\
-iw^* & 1
\end{pmatrix}, \frac{1}{2}\begin{pmatrix}
1 & -iw\\
iw^* & 1
\end{pmatrix}\right)
\frac{1}{\sqrt{2}}\begin{pmatrix}
1 & iw\\
-iw^* & -1
\end{pmatrix}\\
&= 
m\left(\begin{pmatrix}
1 & 0\\
0 & 0
\end{pmatrix}, \begin{pmatrix}
0 & 0\\
0 & 1
\end{pmatrix}\right)
= \left\{
\frac{1}{2}
\begin{pmatrix}
1 & v\\
v^* & 1
\end{pmatrix}
\middle| v\in \U(p_1Mp_1)
\right\}.
\end{split}
\]
Our task is to show that the mapping $\operatorname{Ad}(U) \circ \Phi \circ \operatorname{Ad}(U)$ is equal to the identity mapping on $\displaystyle \left\{
\frac{1}{2}
\begin{pmatrix}
1 & v\\
v^* & 1
\end{pmatrix}
\middle| v\in \U(p_1Mp_1)
\right\}$.
We have 
\[
\operatorname{Ad}(U)\begin{pmatrix}1&0\\ 0&0\end{pmatrix} = \frac{1}{2}\begin{pmatrix}1&iw\\ -iw^*&1\end{pmatrix}
\]
and
\[
\operatorname{Ad}(U)\left(
\frac{1}{2}\begin{pmatrix}
1 & u\\
u^* & 1
\end{pmatrix}\right) = 
\frac{1}{4}\begin{pmatrix}
2-iuw^*+iwu^* & -u-wu^*w\\
-u^*-w^*uw^* & 2-iu^*w+iw^*u
\end{pmatrix}
\]
for every $u\in\U(p_1Mp_1)$. 
In particular, for every self-adjoint unitary $a\in \U(p_1Mp_1)$, we have 
\[\operatorname{Ad}(U)\left(
\frac{1}{2}\begin{pmatrix}
1 & -aw\\
-w^*a & 1
\end{pmatrix}\right) = 
\frac{1}{2}
\begin{pmatrix}
1 & aw\\
w^*a & 1
\end{pmatrix}.
\] 
Therefore, if $v = iw$ or $v=aw$ for some self-adjoint unitary $a$, then 
\[\operatorname{Ad}(U) \circ \Phi \circ \operatorname{Ad}(U)\left(
\frac{1}{2}
\begin{pmatrix}
1 & v\\
v^* & 1
\end{pmatrix}\right) = \frac{1}{2}
\begin{pmatrix}
1 & v\\
v^* & 1
\end{pmatrix}.
\]
By the Hatori-Moln\'ar theorem, the same equation holds for every $v\in \U(p_1Mp_1)$.
\end{proof}

In fact, we may assume that the above typical mapping is always a Jordan $^*$-isomorphism. 
We explain this. 

First, take central projections $r_a, r_b, r_c\in\P(M)$ with $r_a+r_b+r_c = 1$ such that  
\begin{itemize}
\item $r_ap$ is an abelian projection for every $p\in\P$,
\item $r_b p\sim r_bp^{\perp}$ for every $p\in\P$, and  
\item $r_cpMp$ does not admit a type I$_1$ direct summand for every $p\in\P$, and $z(1-p_1-p_2)r_c = r_c$ for arbitrary $p_1, p_2\in\P$ with $p_1\perp p_2$. 
\end{itemize}
Fix $p_1, p_2\in\P$ with $p_1\perp p_2$. 
Since $r_ap$ is an abelian projection, we can take $\widetilde{T}$ as in the above proof so that it is a Jordan $^*$-homomorphism on $r_a (p_1+p_2)M (p_1+p_2)$. 
We show that $\widetilde{T}$ is also a Jordan $^*$-homomorphism on $r_c (p_1+p_2)M (p_1+p_2)$. 
By the condition of $r_c$, we can take a projection $e \in \P(M)$ such that $e\leq r_cp_2$, $r_c z(e) = r_c = r_c z(p_2-e)$ and $e \prec (1-p_1-p_2)$. 
Consider the restrictions of $T$ to the subset $S = \{p\in\P\mid p\leq p_1+ e\}$.  Note that $T$ is equal to $\widetilde{T}$ on this subset. 
Put $S_1 := \{p\in \P\mid p\perp (p_1 + e)\}$. 
It follows $S = \{p\in \P\mid p\perp S_1 \}$.  
Since $T$ preserves orthogonality, we have $\widetilde{T}(S) = T(S) = \{q\in \Q\mid q\perp T(S_1)\}$. 
If $\widetilde{T}$ is not a Jordan $^*$-homomorphism on $r_c (p_1+p_2)M (p_1+p_2)$, then $\widetilde{T}(S)$ cannot be written as above. 
Hence $\widetilde{T}$ is a Jordan $^*$-homomorphism on $r_c (p_1+p_2)M (p_1+p_2)$. 

Note that $r_b(p_1+p_2)=r_b$. 
We can take a typical mapping $\psi\colon r_bM\to r_bM$ with the property that $\widetilde{T}\circ \psi\colon r_bM\to N$ is a Jordan $^*$-homomorphism. 
Define the typical mapping $\Psi\colon M\to M$ by $\Psi(x):=\psi(r_bx)+(r_a+r_c)x$, $x\in M$. 
By the assumption concerning $r_b$, we have $\Psi(\P)=\P$.  
Considering the composition $T\circ \Psi$ instead of $T$, we may assume $\widetilde{T}$ is a Jordan $^*$-isomorphism. 
Let $p_3, p_4\in \P$ satisfy $p_3\perp p_4$. 
There exists $p_5\in \P$ such that $p_1\perp p_5$ and $p_3\leq p_1+p_5$. 
Note that $r_b(p_1+p_5) = r_b$. 
Considering the restriction of $T$ to the set $\{p\in \P\mid p\leq (r_a+r_c)p_1 + r_b\}$ and using the same discussion as above, we see that the restriction of $T$ to the subset $\{p\in\P\mid p\leq p_1+p_5\}$ extends to a Jordan $^*$-isomorphism from $(p_1+p_5)M(p_1+p_5)$ onto $(T(p_1)+T(p_5))N(T(p_1)+T(p_5))$. 
Since $p_3\leq p_1+p_5$, considering the restriction of $T$ to the set $\{p\in \P\mid p\leq (r_a+r_c)p_3 + r_b\}$, we also see that the restriction of $T$ to the subset $\{p\in\P\mid p\leq p_3+p_4\}$ extends to a Jordan $^*$-isomorphism from $(p_3+p_4)M(p_3+p_4)$ onto $(T(p_3)+T(p_4))N(T(p_3)+T(p_4))$.\medskip

Recall that a bijection $F$ from $\P(M)$ onto $\P(N)$ is called an \emph{orthoisomorphism} when it satisfies $pq = 0$ if and only if $F(p)F(q) = 0$, for $p, q\in \P(M)$.\medskip

We show that, under the above assumptions, the mapping $T$ extends uniquely to an orthoisomorphism from $\P(M)$ onto $\P(N)$. 

First, we extend $T$ to a mapping $T_1$ from 
$\{e\in\P(M)\mid e\leq p\,\, \text{for some}\,\, p\in\P\}$ to 
$\{f\in\P(N)\mid f\leq q\,\, \text{for some}\,\, q\in\Q\}$ by
\[
T_1(e) := \bigwedge\{T(p)\mid p\in\P, e\leq p\}. 
\]
We show that $T_1$ is a bijection which preserves orthogonality in both directions. 
Fix $e$. 
Take some $p_0\in\P$ with $e\leq p_0$ and $f\in \P(M)$ with $e\sim f\leq p_0^{\perp}$. 
We prove $T_1(e) = T(p_0) - T(p_0)T((p_0-e)+f)$. 
Suppose $p_1\in\P$ satisfies $e \leq p_1$. 
There exists a projection $p_2\in\P$ with the property 
$p_2\perp p_0$ and $f, p_1 \leq p_0+p_2$. 
Then $T$ restricts to a bijection $T_0\colon \{p\in \P\mid p\leq p_0+p_2\} \to \{q\in \Q\mid q\leq T(p_0)+T(p_2)\}$ and $T_0$ extends to a Jordan $^*$-isomorphism $J_0$ from $(p_0+p_2)M(p_0+p_2)$ onto $(T(p_0)+T(p_2))N(T(p_0)+T(p_2))$. 
Hence we obtain $T(p_1) = J_0(p_1) \geq J_0(e) = J_0(p_0)- J_0(p_0)J_0((p_0-e)+f) = T(p_0) - T(p_0)T((p_0-e)+f)$ for any $p_1\in\P$ with $e\leq p_1$ and thus $T_1(e) \geq T(p_0) - T(p_0)T((p_0-e)+f)$.  
In addition, we have $T(p_0) - T(p_0)T((p_0-e)+f) = J_0(e) = J_0(p_0)J_0((p_2-f)+ e) = T(p_0)T((p_2-f)+ e) \geq T_1(e)$. 
It follows $T_1(e) = T(p_0) - T(p_0)T((p_0-e)+f)$. 

Let $p_3, p_4\in\P$ be mutually commuting projections. 
Put $e= p_3p_4$, $p_0=p_3$, take some $f\in\P(M)$ so that $e\sim f \leq 1-p_3\vee p_4$ and put $p_2= (p_4-e)+f$. 
Then the above discussion shows that $T_1(p_3p_4) = T_1(e) = T(p_0)T((p_2-f)+e) = T(p_3)T(p_4)$. 
Thus $T_1$ is determined uniquely by the condition $T_1(p_3p_4)= T(p_3)T(p_4)$ for an arbitrary pair of mutually commuting projections $p_3, p_4\in\P$.   
It follows $T_1$ is a bijection with its inverse $T_1^{-1}\colon \{f\in\P(N)\mid f\leq q\,\, \text{for some}\,\, q\in\Q\} \to \{e\in\P(M)\mid e\leq p\,\, \text{for some}\,\, p\in\P\}$ which is defined by $T_1^{-1}(f) := \bigwedge\{q\mid q\in\Q, f\leq q\}$.
Since $T$ preserves orthogonality in both directions, so does $T_1$. 

We define a mapping $T_2\colon \P(M) \to \P(N)$ by 
\[
\begin{split}
T_2(p) &:= \bigvee \{T_1(e) \mid e\leq p,\,\, e\leq p_0 \text{ for some } p_0 \in \P\}\\
&= \bigwedge \{T_1(e)^{\perp} \mid e\perp p,\,\, e\leq p_0 \text{ for some } p_0 \in \P\}.
\end{split}
\]
It follows $T_2$ is an orthoisomorphism which extends $T$.\medskip 

Lastly, we rely on the following theorem due to Dye and its slight extension by the author. 
\begin{theorem}[Dye, {\cite[Corollary of Theorem 1]{D}} ($+$ {\cite[Proposition 5.2]{Mor}}) ]
Let $M$ and $N$ be two von Neumann algebras. 
Suppose $T\colon \P(M)\to \P(N)$ is an orthoisomorphism which preserves the distances between maximal abelian projections in the type I$_2$ direct summands.
Then there exists a Jordan $^*$-isomorphism from $M$ onto $N$ which extends $T$. 
\end{theorem}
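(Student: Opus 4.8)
The statement is essentially Dye's theorem, supplemented by the refinement of \cite[Proposition 5.2]{Mor} needed because of the type I$_2$ anomaly, so the plan is to recall the shape of Dye's argument and to indicate where the extra metric hypothesis enters. The starting point is that an orthoisomorphism $T$ is automatically an order isomorphism: for projections $p,q$ one has $p\leq q$ if and only if every projection orthogonal to $q$ is orthogonal to $p$, and $T$ respects this in both directions. Hence $T$ is an isomorphism of complete orthomodular lattices, so it sends central projections to central projections and abelian projections to abelian projections (a projection $p$ is abelian if and only if the interval $[0,p]$ is distributive), and, more generally, it respects the type decomposition, matching the type I$_1$, type I$_2$, type I$_n$ $(n\geq 3)$, type II and type III summands of $M$ with the corresponding summands of $N$. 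It therefore suffices to produce a Jordan $^*$-isomorphism on each summand and take the direct sum, and among these only the type I$_2$ summand will require the distance hypothesis.

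On the type I$_1$ summand the restriction of $T$ is a Boolean-algebra isomorphism between the two projection lattices; by Stone duality it is induced by a homeomorphism of the Gelfand spectra and so extends to a $^*$-isomorphism of the abelian von Neumann algebras. On the summands of type I$_n$ with $n\geq 3$, or of type II or type III, I would carry out the coordinatization step that forms the heart of Dye's theorem: fix a maximal orthogonal family of pairwise equivalent abelian projections (in the type I$_n$ case) or an appropriate halving and matrix-unit system (otherwise), transport the order and orthogonality pattern through $T$ to obtain the analogous system on the $N$-side (here one uses that $T$ preserves Murray-von Neumann equivalence, which is itself encoded in the ortho-lattice structure), check that the induced correspondence is additive on orthogonal projections --- this is Dye's key additivity lemma --- and then invoke the coordinatization results for such lattices (von Neumann's continuous-geometry coordinatization in the finite case, and its analogues for the properly infinite summands) to conclude that $T$ is the restriction of a Jordan $^*$-isomorphism. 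The uniqueness in these coordinatization results is what lets the construction be performed measurably over the center rather than only factor by factor.

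The type I$_2$ summand is the one place where the distance condition is used. Writing it as $A\overline{\otimes}M_2(\C)$ with $A$ abelian, the nontrivial projections are fiberwise the rank-one projections, which form a copy of $S^2$, with $p\perp q$ precisely when the corresponding points are antipodal; thus $T$ restricts fiberwise to a bijection of $S^2$ that commutes with the antipodal map, which in general need not be a rotation or a reflection. But for rank-one $p,q$ the number $\lVert p-q\rVert$ is a strictly increasing function of the geodesic distance between the corresponding points of $S^2$, so the hypothesis that $T$ preserves distances between maximal abelian projections forces each fiber map to be an isometry of $S^2$, hence a rotation or a reflection, that is, to be implemented by a unitary or an antiunitary of $\C^2$ (equivalently, by a $^*$- or a $^*$-anti-automorphism of $M_2(\C)$). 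Since the rotation or reflection is determined by the metric data, these fiber maps assemble measurably into a Jordan $^*$-isomorphism of $A\overline{\otimes}M_2(\C)$ extending $T$ on this summand.

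Taking the direct sum of the Jordan $^*$-isomorphisms obtained on the five summands yields a Jordan $^*$-isomorphism $M\to N$ extending $T$. The main obstacle is the coordinatization step for the non-commutative summands not of type I$_2$: one must show that the orthomodular-lattice structure alone rigidly determines the Jordan $^*$-algebra, which is exactly the content of Dye's theorem and rests on his additivity lemma together with von Neumann's coordinatization machinery; organizing this argument uniformly --- that is, measurably --- across the center is the technically delicate point.
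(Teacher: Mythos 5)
The paper offers no proof of this statement: it is imported wholesale from Dye \cite{D} together with Proposition 5.2 of \cite{Mor} and used as a black box, so the only meaningful check is whether your reconstruction of the known argument is sound. Its architecture is right: an orthoisomorphism is an order isomorphism of complete ortholattices, hence preserves centrality, abelianness and the type decomposition; the abelian part is handled by Stone duality; Dye's machinery applies off the type I$_2$ summand; and on the type I$_2$ summand the nontrivial projections are fiberwise a copy of $S^2$ with orthogonality equal to antipodality, where the operator-norm distance between rank-one projections of $M_2(\C)$ is a strictly increasing function of the angular distance, so that metric preservation forces each fiber map to lie in $O(3)$ and hence to be implemented by a unitary or antiunitary.

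The genuine gap is in the passage from the hypothesis to the fiberwise statement. On a type I$_2$ summand $A\mathbin{\overline{\otimes}}M_2(\C)$ the distance between two maximal abelian projections is the \emph{essential supremum} over the spectrum of $A$ of the fiberwise distances, and preservation of this single number does not force the individual fiber maps to be isometries of $S^2$: a bijection could distort distances over a small central piece without changing the supremum. One must first localize --- cutting by central projections and using that $T$ preserves the center and the order, so that distances are still preserved over every central subprojection --- before concluding that almost every fiber map is a sphere isometry; and one must then choose the implementing unitaries/antiunitaries (and the unitary-versus-antiunitary dichotomy) measurably over the center to assemble a single Jordan $^*$-isomorphism. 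This localization-plus-selection argument is precisely the content of \cite[Proposition 5.2]{Mor}, which your sketch replaces with ``forces each fiber map to be an isometry'' and ``assemble measurably.'' A smaller point: Dye's proof of the I$_2$-free case does not rest on von Neumann's coordinatization of continuous geometries (the projection lattice of an infinite von Neumann algebra is not even modular); it proceeds by extending the restriction of $T$ to the Boolean algebra of each maximal abelian subalgebra and proving these extensions consistent via his additivity lemma, which is where type I$_2$ must be excluded. Since you defer that step to the reference in any case, this is a mischaracterization rather than an error, but the ``coordinatization'' paragraph could not be completed as written.
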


Since our assumption shows that $T_2$ restricts to a surjective isometry between the classes of maximal abelian projections in the type I$_2$ direct summands, 
$T_2$ extends to a Jordan $^*$-isomorphism from $M$ onto $N$. 
This completes the proof of Theorem \ref{thm-G} when $\P$ and $\Q$ are finite.\medskip\medskip

Next we consider the case both $\P$ and $\Q$ are properly infinite. 
Thus the setting is as follows: 
Let $\P\subset M$ and $\Q\subset N$ be properly infinite proper Grassmann spaces. 
Assume $p\prec p^{\perp}$ for every $p\in\P$ and $q\prec q^{\perp}$ for every $q\in\Q$. 
Suppose that $T\colon \P\to\Q$ is a surjective isometry. 

The first step is to show that we may assume $T$ preserves orthogonality in both directions. 
As in \cite{GS2}, for two projections $p_1, p_2\in \P$, we write $p_1 \sharp p_2$ when $p_1 \perp p_2$ and $p_1 \prec (1-p_1-p_2)$. 
\smallskip

Since $\P$ is properly infinite, we can take mutually orthogonal projections $p_1, p_2, p_3\in\P$. 
We have $p_1 \vartriangle p_2$, $p_2 \vartriangle p_3$, $p_3 \vartriangle p_1$, thus $T(p_1) \vartriangle T(p_2)$, $T(p_2) \vartriangle T(p_3)$, $T(p_3) \vartriangle T(p_1)$.
It follows there exists a central projection $r\in \P(M)$ such that $T(p_1)r, T(p_2)r, T(p_3)r$ are mutually orthogonal and $T(p_1)^{\perp}r^{\perp}, T(p_2)^{\perp}r^{\perp}, T(p_3)^{\perp}r^{\perp}$ are mutually orthogonal. 
Composing $T$ with the typical mapping $q\mapsto qr+ q^{\perp}r^{\perp}$ on $\Q$, 
we may assume that $T(p_1), T(p_2), T(p_3)$ are mutually orthogonal.\smallskip 

Under this assumption, we show that, for any projections $p, p_0 \in \P$, we have $p\sharp p_0$ if and only if $T(p) \sharp T(p_0)$. 
Suppose $p\sharp p_0$. We have $p\sim p_1$, $p_0\sim p_2$. 
Since $\P$ is properly infinite, we obtain $(1-p-p_0) \sim ((1-p-p_0) + p+p_0) = 1$ and similarly $(1- p_1-p_2) \sim 1$, thus $(1-p-p_0) \sim (1-p_1-p_2)$. 
Therefore there exists a unitary $u \in \U(M)$ which satisfies $upu^* = p_1$ and $up_0u^* = p_2$. 
By the functional calculus on $M$, there exists a self-adjoint operator $a\in M_{sa}$ with $u = e^{ia}$. 
We show $T(e^{ita} p e^{-ita}) \sharp T(e^{ita}p_0e^{-ita})$ for every $t\in[0, 1]$. 
It suffices to show $T(p) \sharp T(p_0)$ when $\lVert p-p_1\rVert < 1/2$ and $\lVert p_0-p_2\rVert < 1/2$. 
In that case, we have 
\[
\begin{split}
& \lVert (1-T(p)-T(p_0)) - (1-T(p_1) - T(p_2)) \rVert \\
\leq &\,  \lVert  T(p_1) - T(p) \rVert + \lVert T(p_2)-T(p_0) \rVert = 
\lVert  p_1 - p \rVert + \lVert p_2-p_0 \rVert < 1.
\end{split}
\]
Combine this inequality with $T(p) \vartriangle T(p_0)$ to obtain $T(p) \perp T(p_0)$. 
Moreover, we can apply the generalization of Halmos' theorem to the two projections $1-T(p)-T(p_0)$ and $1-T(p_1) - T(p_2)$ to obtain $(1-T(p)-T(p_0)) \sim (1-T(p_1)-T(p_2))$. 
Thus we have $T(p) \sharp T(p_1)$. \smallskip

We have shown that $T$ preserves the relation $\sharp$ in both directions.  
It is easy to see that for $p_1, p_2\in \P$, we have $p_1\leq p_2$ if and only if $\{p\in\P \mid p\sharp p_1\} \supset \{p\in\P \mid p\sharp p_2\}$. 
Thus we obtain $p_1\leq p_2$ if and only if $T(p_1) \leq T(p_2)$. 

Let $p_1, p_2\in \P$ satisfy $p_1 \vee p_2 \in \P$. 
Since $p_1 \vee p_2$ is the minimum projection in $\P$ which majorizes both $p_1$ and $p_2$, we have $T(p_1 \vee p_2) = T(p_1) \vee T(p_2)$. 
Similarly, if $p_1, p_2\in \P$ satisfy $p_1 \wedge p_2 \in \P$, then $T(p_1 \wedge p_2) = T(p_1) \wedge T(p_2)$. 

Let $p_1, p_2\in \P$ satisfy $p_1 \perp p_2$. 
Since $\P$ is properly infinite, there exist mutually orthogonal subprojections $p_{11}, p_{12} \in \P$ of $p_1$ which satisfy $p_1 = p_{11}+ p_{12}$. 
Since $p_{11} \sharp p_2$ and $p_{12} \sharp p_2$, we have $T(p_{11}) \sharp T(p_2)$ and $T(p_{12}) \sharp T(p_2)$. 
Hence we obtain $T(p_2) \perp (T(p_{11}) \vee T(p_{12})) = T(p_{11}\vee p_{12}) = T(p_1)$. 
Therefore, $T$ preserves orthogonality in both directions. 

We show a version of Lemma \ref{lem}.

\begin{lemma} 
Under the above assumptions, suppose $p_1, p_2 \in \P$ are mutually orthogonal. 
Then, $T$ restricts to a bijection $T_0\colon \{p\in \P\mid p\leq p_1+p_2\} \to \{q\in \Q\mid q\leq T(p_1)+T(p_2)\}$. 
Moreover, $T_0$ extends (uniquely) to a Jordan $^*$-isomorphism from $(p_1+p_2)M(p_1+p_2)$ onto $(T(p_1)+T(p_2))N(T(p_1)+T(p_2))$.
\end{lemma}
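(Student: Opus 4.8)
The plan is to run the argument of Lemma \ref{lem} once more, replacing the appeals to the finiteness of $\P$ there by the preservation of orthogonality in both directions (and the preservation of $\le$, $\vee$, $\wedge$ that we derived from it above), which is available in the present setting.

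As in Lemma \ref{lem}, $p_1$ and $p_2$ are unitarily equivalent in $M$ since they lie in the connected component $\P$, so I fix a partial isometry $v\in M$ with $vv^*=p_1$, $v^*v=p_2$ and identify $(p_1+p_2)M(p_1+p_2)$ with $M_2(p_1Mp_1)$ so that $p_1=\operatorname{diag}(1,0)$, $p_2=\operatorname{diag}(0,1)$ and $m(p_1,p_2)=\{\frac{1}{2}\begin{pmatrix}1&u\\u^*&1\end{pmatrix}\mid u\in\U(p_1Mp_1)\}$, doing the same on the $N$ side for $T(p_1)$ and $T(p_2)$. Since $T$ preserves $\vartriangle$ in both directions and is an isometry, it carries $m(p_1,p_2)$ bijectively onto $m(T(p_1),T(p_2))$. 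To get the bijection $T_0$ it suffices to show $T(q)\le T(p_1)+T(p_2)$ whenever $q\in\P$, $q\le p_1+p_2$ (and then apply the same to $T^{-1}$). In Lemma \ref{lem} this came out of the finite normal form of $q$; here I would instead use the preservation of orthogonality, noting that $\bigvee\{q\in\P\mid q\le p_1+p_2\}=p_1+p_2$ and that, when $1-p_1-p_2$ is properly infinite, any $q\in\P$ with $q\not\le p_1+p_2$ is non-orthogonal to some member of $\P$ lying below $1-p_1-p_2$. When $1-p_1-p_2$ is merely finite (in particular when $p_1+p_2=1$) a separate argument is needed --- cancellation of finite projections shows that $q\mapsto q+(1-p_1-p_2)$ maps $\{q\in\P\mid q\le p_1+p_2\}$ into $\P$, which should be exploited --- and I expect this to be the main obstacle.

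With $T_0$ in hand the remainder follows Lemma \ref{lem}. After normalizing $T\!\left(\frac{1}{2}\begin{pmatrix}1&1\\1&1\end{pmatrix}\right)=\frac{1}{2}\begin{pmatrix}1&1\\1&1\end{pmatrix}$, the restriction of $T$ to $m(p_1,p_2)$ is a surjective isometry $\tau\colon\U(p_1Mp_1)\to\U(T(p_1)NT(p_1))$; I feed $\tau$ to the Hatori--Moln\'ar theorem, obtain central projections $r_1,\dots,r_4\in\P(p_1Mp_1)$ and the four maps $\varphi_1,\dots,\varphi_4$, and assemble the typical mapping $\widetilde{T}$ of $M_2(p_1Mp_1)$ onto $M_2(T(p_1)NT(p_1))$ exactly as in Lemma \ref{lem}. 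The $\operatorname{Ad}(U)$ reduction of Lemma \ref{lem}, again with the finite normal form of a general $p\le p_1+p_2$ in $\P$ replaced by the orthogonality argument above, then shows that $\widetilde{T}^{-1}\circ T_0$ is the identity on $\{p\in\P\mid p\le p_1+p_2\}$, so $\widetilde{T}$ extends $T_0$; uniqueness of the extension is clear, since the projections $\frac{1}{2}\begin{pmatrix}1&u\\u^*&1\end{pmatrix}$ ($u\in\U(p_1Mp_1)$) together with $p_1$, $p_2$ have full linear span in $M_2(p_1Mp_1)$.

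It remains to see that $\widetilde{T}$ is genuinely a Jordan $^*$-isomorphism. Here the properly infinite case is simpler than the finite one: $(p_1+p_2)M(p_1+p_2)$ has no type I$_1$ direct summand, so the argument used in Lemma \ref{lem} for the summand carrying no type I$_1$ part now applies to the whole corner. Namely, one picks a subprojection $e\le p_2$ with $e\prec 1-p_1-p_2$, puts $S=\{p\in\P\mid p\le p_1+e\}$ and $S_1=\{p\in\P\mid p\perp(p_1+e)\}$, and uses $S=\{p\in\P\mid p\perp S_1\}$ together with the fact that $T$ preserves orthogonality on all of $\P$ to get $\widetilde{T}(S)=T(S)=\{q\in\Q\mid q\perp T(S_1)\}$; a typical mapping that is not a Jordan $^*$-homomorphism on such a corner cannot have an image of this form. (If $1-p_1-p_2=0$ this step, like the corresponding one for the bijection, must be handled directly.) This produces the Jordan $^*$-isomorphism extending $T_0$, completing the proof.
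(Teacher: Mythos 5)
Your construction of $\widetilde{T}$ via $m(p_1,p_2)$ and the Hatori--Moln\'ar theorem is the same as the paper's, but there is a genuine gap at the step where you claim that the $\operatorname{Ad}(U)$ reduction of Lemma \ref{lem} still shows $\widetilde{T}^{-1}\circ T_0=\mathrm{id}$ ``with the finite normal form \dots replaced by the orthogonality argument above.'' The orthogonality/lattice argument can at best place $T(p)$ below $T(p_1)+T(p_2)$; it says nothing about \emph{which} projection under $T(p_1)+T(p_2)$ it is. The actual role of finiteness in Lemma \ref{lem} is to write $p=\begin{pmatrix}a^2&abw\\ w^*ba&w^*b^2w\end{pmatrix}$ with $w$ a \emph{unitary} of $p_1Mp_1$, which places $p$ in $m(e,f)$ for projections $e,f\in m(p_1,p_2)$ already known to be fixed by $\widetilde{T}^{-1}\circ T$; only then can one conjugate by $U$ and invoke Hatori--Moln\'ar. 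In the properly infinite case one only has $p=\begin{pmatrix}xx^*&xy^*\\ yx^*&yy^*\end{pmatrix}$ with $x^*x+y^*y=p_1$, and the partial isometries in the polar decompositions of $x,y$ need not extend to unitaries, so a general $p\le p_1+p_2$ in $\P$ lies in no $m(e,f)$ of the required form and the Lemma \ref{lem} computation simply does not apply to it. This is the core difficulty of the properly infinite case, and the paper spends most of the proof on it: it reduces via the spectral theorem to $\lvert x\rvert$ with finite spectrum, halves each spectral projection $e_k$ into $f_k$ and $e_k-f_k$ (possible because $p_1$ is properly infinite) so that the relevant partial isometries admit unitary extensions $v_0,w_0,v_1,w_1$, verifies $T=\widetilde{T}$ on two auxiliary projections of ``unitary normal form'' whose meet is a subprojection $p_0\le p$, and then recovers $T(p_0)=\widetilde{T}(p_0)$, $T(p-p_0)=\widetilde{T}(p-p_0)$ and finally $T(p)=T(p_0)\vee T(p-p_0)=\widetilde{T}(p)$ using the already-established preservation of $\wedge$ and $\vee$. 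Nothing in your proposal substitutes for this.

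Two smaller points. First, the case $1-p_1-p_2=0$ (or finite), which you flag twice as the main obstacle, is not an obstacle on the paper's route: the containment $T(p)\le T(p_1)+T(p_2)$ falls out of the identity $T(p)=\widetilde{T}(p)$ rather than from an orthogonality count against projections outside $p_1+p_2$, so no separate argument for the bijection is needed. Second, for the Jordan-ness of $\widetilde{T}$ the paper does not reuse the $r_c$-argument (which needs room below $1-p_1-p_2$); instead it chooses $p\le p_1$ with $p\sim p_1-p$ and $\tilde p_1\le\tilde p_2\le p_2$ with $\tilde p_1\sim\tilde p_2-\tilde p_1\sim p_2-\tilde p_2$, checks $T=\widetilde{T}$ on $p+\tilde p_1$ and $p+\tilde p_2$, and uses the preserved order relation $\widetilde{T}(p+\tilde p_1)\le\widetilde{T}(p+\tilde p_2)$ to exclude the ``flip'' summands of the typical mapping; this argument is internal to $(p_1+p_2)M(p_1+p_2)$ and works even when $p_1+p_2=1$.
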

\begin{proof}
Using the same notations and discussions as in the proof of Lemma \ref{lem}, we can construct a typical mapping $\widetilde{T}$ from $(p_1+p_2)M(p_1+p_2)$ onto $(T(p_1)+T(p_2))N(T(p_1)+T(p_2))$. 
Take projections $p, \tilde{p}_1, \tilde{p}_2\in\P$ such that $p\leq p_1$, $p\sim (p_1-p)$ and $\tilde{p}_1 \leq \tilde{p}_2\leq p_2$, $\tilde{p}_1 \sim (\tilde{p}_2-\tilde{p}_1) \sim (p_2-\tilde{p}_2)$. 
By the same discussion as in Lemma \ref{lem}, we see that 
$T(p+\tilde{p}_1) = \widetilde{T}(p+\tilde{p}_1)$ and $T(p+\tilde{p}_2) = \widetilde{T}(p+\tilde{p}_2)$. 
It follows $ \widetilde{T}(p+\tilde{p}_1) \leq \widetilde{T}(p+\tilde{p}_2)$, which shows that $\widetilde{T}$ is actually a Jordan $^*$-isomorphism. 

We show $T(p) = \widetilde{T}(p)$ for every $p\in \P$ with $p\leq p_1+p_2$. 
Since $p\sim p_1 = \begin{pmatrix}1&0\\0&0\end{pmatrix}$, there exist $x, y\in p_1Mp_1$ which satisfy 
\[
x^*x+y^*y =p_1,\quad p = \begin{pmatrix} xx^*&xy^*\\ yx^*&yy^*\end{pmatrix}. 
\]
Let $x= v\lvert x\rvert$, $y= w\lvert y\rvert$ be polar decompositions. 
By the spectral theorem, we may assume that the spectral set $\sigma(\lvert x\rvert)$ of $\lvert x\rvert$ is a finite set. 
Thus $\lvert x\rvert = \sum_{k=1}^n \lambda_k e_k$ for some $0= \lambda_1< \lambda_2 <\cdots < \lambda_n = 1$ and mutually orthogonal projections $e_k \in \P(p_1Mp_1)$ such that $\sum_{k=1}^ne_k = p_1$. (Projections $e_1$ and $e_n$ may be 0.)  
We have $\lvert y\rvert = \sum_{k=1}^n \sqrt{1-\lambda_k^2} e_k$. 
Since $p_1$ is properly infinite, there exist subprojections $f_k\leq e_k$ in $\P(p_1Mp_1)$, $k=1, \ldots, n$, which satisfy the following property:  
$\sum_{k=1}^n f_k \sim \sum_{k=1}^n (e_k-f_k) \sim p_1$, and 
partial isometries $v\sum_{k=2}^n f_k$, $w\sum_{k=1}^{n-1} f_k$, $v\sum_{k=2}^n (e_k-f_k)$ and $w\sum_{k=1}^{n-1} (e_k-f_k)$ admit unitary extensions $v_0, w_0, v_1$ and $w_1\in \U(p_1Mp_1)$, respectively. 
We show that the projection
\[
\begin{split}
p_0& :=\begin{pmatrix}
v(\sum_{k=1}^n \lambda_k f_k)^2v^* & v(\sum_{k=1}^n \lambda_k f_k) (\sum_{k=1}^n \sqrt{1-\lambda_k^2} f_k) w^*\\
w(\sum_{k=1}^n \sqrt{1-\lambda_k^2} f_k) (\sum_{k=1}^n \lambda_k f_k)v^* & w(\sum_{k=1}^n \sqrt{1-\lambda_k^2} f_k)^2 w^*
\end{pmatrix} \\
&= \begin{pmatrix}
v_0(\sum_{k=1}^n \lambda_k f_k)^2v_0^* & v_0(\sum_{k=1}^n \lambda_k f_k) (\sum_{k=1}^n \sqrt{1-\lambda_k^2} f_k) w_0^*\\
w_0(\sum_{k=1}^n \sqrt{1-\lambda_k^2} f_k) (\sum_{k=1}^n \lambda_k f_k)v_0^* & w_0(\sum_{k=1}^n \sqrt{1-\lambda_k^2} f_k)^2 w_0^*
\end{pmatrix} 
\end{split}
\]
in $\P$ satisfies $T(p_0)=\widetilde{T}(p_0)$. 
Consider the projection
\[
\begin{split}
& \, p_0 + \begin{pmatrix}v_0 (\sum_{k=1}^n (e_k-f_k))v_0^*&0\\ 0&0\end{pmatrix}\\
=\, & \begin{pmatrix}
v_0((\sum_{k=1}^n \lambda_k f_k)^2 + \sum_{k=1}^n (e_k-f_k))v_0^* & v_0(\sum_{k=1}^n \lambda_k f_k) (\sum_{k=1}^n \sqrt{1-\lambda_k^2} f_k) w_0^*\\
w_0(\sum_{k=1}^n \sqrt{1-\lambda_k^2} f_k) (\sum_{k=1}^n \lambda_k f_k)v_0^* & w_0(\sum_{k=1}^n \sqrt{1-\lambda_k^2} f_k)^2 w_0^*
\end{pmatrix}\\
=\, & 
\begin{pmatrix}
a^2& abv_0w_0^*\\
w_0v_0^* ba & w_0v_0^*b^2v_0w_0^* 
\end{pmatrix},
\end{split}
\]
where $a:= v_0(\sum_{k=1}^n \lambda_k f_k + \sum_{k=1}^n (e_k-f_k))v_0^*$ and 
$b:= v_0 (\sum_{k=1}^n \sqrt{1-\lambda_k^2} f_k)v_0^*$. 
It follows $a, b\geq 0$, $a^2+b^2 = p_0$. 
By the same discussion as in Lemma \ref{lem}, we obtain 
\[
T\left(p_0 + \begin{pmatrix}v_0 (\sum_{k=1}^n (e_k-f_k))v_0^*&0\\ 0&0\end{pmatrix}\right) = \widetilde{T}\left(p_0 + \begin{pmatrix}v_0 (\sum_{k=1}^n (e_k-f_k))v_0^*&0\\ 0&0\end{pmatrix}\right)
\]
Similarly, we obtain
\[
T\left(p_0 + \begin{pmatrix}0&0\\ 0&w_0(\sum_{k=1}^n (e_k-f_k))w_0^*\end{pmatrix}\right) = \widetilde{T}\left(p_0 + \begin{pmatrix}0&0\\ 0&w_0(\sum_{k=1}^n (e_k-f_k))w_0^*\end{pmatrix}\right). 
\]
Since 
\[
\left(p_0 + \begin{pmatrix}v_0 (\sum_{k=1}^n (e_k-f_k))v_0^*&0\\ 0&0\end{pmatrix}\right) \wedge
\left(p_0 + \begin{pmatrix}0&0\\ 0&w_0(\sum_{k=1}^n (e_k-f_k))w_0^*\end{pmatrix}\right) = p_0, 
\]
we have $T(p_0)= \widetilde{T}(p_0)$. 
Similarly, we have $T(p-p_0)= \widetilde{T}(p-p_0)$. 
Finally, we have $T(p) = T(p_0)\vee T(p-p_0) = \widetilde{T}(p_0)\vee \widetilde{T}(p-p_0) = \widetilde{T}(p)$.
\end{proof}

A discussion which is similar to (or simpler than) that in finite cases shows that it is possible to extend $T$ to an orthoisomorphism from $\P(M)$ onto $\P(Q)$. 
By Dye's theorem, $T$ extends to a Jordan $^*$-isomorphism from $M$ onto $N$.$\hfill\Box$

\section{Isometries between projection lattices}\label{lattice}
In this section, we write $M\cong N$ when two von Neumann algebras $M$ and $N$ are Jordan $^*$-isomorphic.
\begin{theorem}\label{thm-l}
Let $M, N$ be von Neumann algebras without type I$_1$ direct summands. 
Then $M$ and $N$ are Jordan $^*$-isomorphic if and only if $\P(M)$ and $\P(N)$ are isometric. 
\end{theorem}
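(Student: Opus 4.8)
The plan is to deduce this from Theorem \ref{thm-G} after reducing to the case of proper Grassmann spaces. The nontrivial direction is: if $\P(M)$ and $\P(N)$ are isometric, then $M\cong N$. So suppose $\Theta\colon\P(M)\to\P(N)$ is a surjective isometry. Since the distance between distinct connected components of a projection lattice equals $1$ (and an isometry cannot map two points at distance $1$ into the same component or merge components, because within a component every pair is joined by a path while the distance to a different component is always exactly $1$), $\Theta$ must carry connected components onto connected components. The singleton components are exactly $\{0\}$ and $\{1\}$, so $\Theta$ sends $\{0,1\}$ onto $\{0,1\}$; composing with $p\mapsto p^\perp$ on $N$ if necessary, we may assume $\Theta(0)=0$ and $\Theta(1)=1$. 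Every other component is a Grassmann space, and $\Theta$ restricts to a surjective isometry between Grassmann spaces $\P\subset M$ and $\Q=\Theta(\P)\subset N$.

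Next I would reduce each such Grassmann space to a proper one. As explained in the paragraph preceding Theorem \ref{thm-G}, for a fixed $p_0\in\P$ the map $p\mapsto p\,z(p_0)z(p_0^\perp)$ is a bijection from $\P$ onto a proper Grassmann space in $M\,z(p_0)z(p_0^\perp)$; the complementary pieces $\P\,z(p_0)z(p_0^\perp)^\perp$ consist of projections $p$ with $p$ or $p^\perp$ central, i.e.\ projections of the form (central projection) $\oplus\,0$ or (central projection)$\,\oplus\,1$ restricted appropriately — in any case these correspond bijectively and isometrically to central projections, hence to the projection lattice of a commutative von Neumann algebra. The point is that this decomposition is canonical and metric, so $\Theta$ respects it; the ``central'' part forces an isometry, hence (by Stone duality / the structure of projection lattices of abelian algebras) a Boolean-algebra isomorphism, between the centers, while the proper parts are handled by Theorem \ref{thm-G}. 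The hypothesis that $M$ and $N$ have no type $\mathrm I_1$ summand enters here: it guarantees there are ``enough'' proper Grassmann spaces and prevents the pathological case where $\P(M)$ reduces to a Boolean algebra with no Jordan structure to pin down; concretely, one uses it to ensure the Jordan $^*$-isomorphisms produced on the various summands can be glued into a single Jordan $^*$-isomorphism $M\to N$.

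More precisely, I would pick a sufficiently large proper Grassmann space — for instance, after passing to $M_2(M)$ and using the $p_1,p_2$ trick from the introduction, or directly by choosing a halving projection on a suitable summand — apply Theorem \ref{thm-G} to get a Jordan $^*$-isomorphism $J$ between the relevant direct summands of $M$ and $N$, and check compatibility across summands using that $\Theta$ preserves the lattice operations wherever they land back in a Grassmann space (meets and joins are metrically characterized on comparable or orthogonal pairs). Assembling these, together with the center identification, yields a Jordan $^*$-isomorphism $M\to N$. The converse is immediate: a Jordan $^*$-isomorphism $J\colon M\to N$ maps $\P(M)$ bijectively onto $\P(N)$ and, since $\lVert p-q\rVert$ for projections is a $C^*$-expression preserved by Jordan $^*$-isomorphisms, it is an isometry. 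The main obstacle I anticipate is the gluing step: Theorem \ref{thm-G} is naturally stated one proper Grassmann space at a time and comes with its own central projection $r$ (the ``$p\mapsto p^\perp$'' ambiguity), so the real work is organizing the different summands of $M$ (type decomposition, and within type $\mathrm I_n$ the various $n$) so that the locally defined Jordan $^*$-isomorphisms and the flips $r$ are mutually consistent and patch to a global one — this is exactly where the no-type-$\mathrm I_1$ hypothesis is used to rule out the obstructions.
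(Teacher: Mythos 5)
There is a genuine gap, and it is exactly the step you flag at the end as ``the real work'': the gluing is never carried out, and the route you sketch runs into a concrete obstruction before that. If $\P\subset M$ is a proper Grassmann space, its image $T(\P)$ is a Grassmann space of $N$ that is proper only in the corner $Nz(T(p))z(T(p)^{\perp})$, which may be a strict direct summand of $N$. So Theorem \ref{thm-G} only gives a Jordan $^*$-isomorphism of $M$ onto a direct summand of $N$; different choices of Grassmann space in $M$ land in different summands of $N$, with no evident compatibility between the resulting Jordan $^*$-isomorphisms or the flips $r$, so the patching has no clear starting point. Your description of the ``central part'' is also off: for $p$ ranging over a fixed Grassmann space containing $p_0$, the complementary pieces $pz(p_0)^{\perp}$ and $p(z(p_0^{\perp}))^{\perp}$ are constant (equal to $0$ and $(z(p_0^{\perp}))^{\perp}$ respectively), so they carry no information and there is no induced isometry between centers to exploit. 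Finally, the no-type-I$_1$ hypothesis is not used to rule out gluing obstructions; it is used only to produce a single projection $p$ with $z(p)=z(p^{\perp})=1$, i.e.\ one Grassmann space that is proper in all of $M$.

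The paper's proof avoids gluing entirely. Take that one proper Grassmann space $\P\ni p$; Theorem \ref{thm-G} applied to $T|_{\P}$ gives $M\cong Nz(T(p))z(T(p)^{\perp})$, a direct summand of $N$, and symmetrically $N$ is Jordan $^*$-isomorphic to a direct summand of $M$. The proof is then finished by a Schr\"oder--Bernstein-type lemma for von Neumann algebras: if $M\cong Nq$ and $N\cong Mp$ for central projections $p,q$, then $M\cong N$. This is proved by an $\ell^{\infty}$-swindle, writing $M\cong M\oplus R$ with $R=Nq^{\perp}\oplus Mp^{\perp}$, producing an infinite orthogonal family of central projections each cutting a copy of $R$, and concluding $M\cong M\oplus (R\mathbin{\overline{\otimes}}\ell^{\infty})\cong N\oplus (R\mathbin{\overline{\otimes}}\ell^{\infty})\cong N$. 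This Schr\"oder--Bernstein step is the missing idea in your proposal; without it (or a fully worked-out gluing argument, which would be substantially harder) the nontrivial direction remains unproved. Your converse direction is fine.
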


Suppose $T\colon \P(M) \to \P(N)$ is a surjective isometry. 
Since $M$ does not admit a type I$_1$ direct summand, there exists a projection $p\in \P(M)$ which satisfies $z(p)=z(p^{\perp})=1$. 
Take the (proper) Grassmann space $\P$ in $M$ which contains $p$. 
Then $T(\P)$ is a proper Grassmann space in $Nz(T(p)) z(T(p)^{\perp})$. 
By Theorem \ref{thm-G}, it follows that $M$ is Jordan $^*$-isomorphic to $Nz(T(p)) z(T(p)^{\perp})$, which is a direct summand of $N$. 
Similarly, $N$ is Jordan $^*$-isomorphic to a direct summand of $M$. 
Therefore, it suffices to show the following lemma.

\begin{lemma}
Let $M, N$ be von Neumann algebras. 
Suppose that $M$ is Jordan $^*$-isomorphic to a direct summand of $N$, and 
$N$ is Jordan $^*$-isomorphic to a direct summand of $M$. 
Then $M$ is Jordan $^*$-isomorphic to $N$.
\end{lemma}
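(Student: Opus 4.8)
The statement is a "Cantor--Schröder--Bernstein" type assertion for von Neumann algebras with respect to Jordan $^*$-isomorphisms, and the plan is to imitate the classical Schröder--Bernstein argument, being careful that direct summands are cut out by \emph{central} projections and that everything is preserved by Jordan $^*$-isomorphisms. First I would unwind the hypotheses: there is a central projection $e\in\P(N)$ with $M\cong Ne$, and a central projection $f\in\P(M)$ with $N\cong Mf$. Composing these two Jordan $^*$-isomorphisms, I get a Jordan $^*$-isomorphism $\Theta$ from $M$ onto $Mf'$ for a suitable central projection $f'\le f$ in $M$ (namely the image of $e^\perp$'s complement under the relevant map; concretely, $M\cong Ne$ restricts to an isomorphism of $N$-side central summands, and chasing $f$ through gives a central projection $f'\in\P(M)$ with $M\cong Mf'$ and $Mf\cong Mf''$ for the corresponding piece). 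The point is that one obtains a Jordan $^*$-isomorphism of $M$ onto a direct summand $Mg$ of itself, $g\in\P(Z(M))$ central, together with the information that $N\cong M g_1$ where $g\le g_1\le 1$ and $M\cong Mg_1$ is \emph{not} yet available --- rather $N\cong Mf$ and $Mg\cong Mf\,$(restricted appropriately).

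Cleaner is the following bookkeeping. Let $j\colon M\to Nz$ and $k\colon N\to Mw$ be Jordan $^*$-isomorphisms onto direct summands ($z\in Z(N)$, $w\in Z(M)$ central projections). Then $k\circ j\colon M\to Mw$ is a Jordan $^*$-isomorphism onto the direct summand $Mw$, and it carries $Z(M)$ onto $Z(Mw)=Z(M)w$. Iterating, set $w_0:=1$, $w_{n+1}:=(k\circ j)(w_n)\in Z(M)$; since $(k\circ j)(1)=w$ we get $w=w_1\le w_0=1$, and applying the monotone map $k\circ j$ repeatedly yields a decreasing sequence $1=w_0\ge w_1\ge w_2\ge\cdots$ of central projections, with $(k\circ j)$ restricting to a Jordan $^*$-isomorphism $M w_n\to Mw_{n+1}$. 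Put $w_\infty:=\bigwedge_n w_n\in Z(M)$. The classical Schröder--Bernstein decomposition then reads
\[
M \;=\; \Bigl(\bigoplus_{n\ge 0} M(w_n-w_{n+1})\Bigr)\ \oplus\ Mw_\infty,
\]
a direct sum over central projections, and similarly $k(N)=Mw$ decomposes as $\bigoplus_{n\ge 1}M(w_n-w_{n+1})\oplus Mw_\infty$ — note the index shift. Now $k\circ j$ maps $M(w_n-w_{n+1})$ isomorphically onto $M(w_{n+1}-w_{n+2})$ for each $n\ge 0$ and maps $Mw_\infty$ onto $Mw_\infty$.

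To finish I assemble a Jordan $^*$-isomorphism $M\to N$ directly. On the "even versus odd" split: let $P:=\bigvee_{n\ \mathrm{even}}(w_n-w_{n+1})$ and $Q:=\bigvee_{n\ \mathrm{odd}}(w_n-w_{n+1})$, both central in $M$, with $P+Q+w_\infty=1$. Define $\Phi\colon M\to N$ by using $j$ on $MP$ (landing in the corresponding central summand of $N$), using $j\circ (k\circ j)^{-1}$ — equivalently $k^{-1}$ — appropriately on $MQ$ to match it with the remaining summand of $N$, and using $j$ again on $Mw_\infty$ (since $k\circ j$ is an automorphism of $Mw_\infty$, $j$ already identifies $Mw_\infty$ with a summand of $N$ that $k$ sends back onto $Mw_\infty$, so $j|_{Mw_\infty}$ is onto the right piece). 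Concretely, $j$ sends the summand $Mw$ of $M$ (that is, $w_1$-piece onward) into $N$, and $k$ sends $N$ onto $Mw$; the even/odd trick is exactly what is needed to reconcile the two sequences, and each building block is a restriction of $j$ or of $k^{-1}$ to a central summand, hence a Jordan $^*$-isomorphism. Summing these over the orthogonal central projections $P,Q,w_\infty$ yields a Jordan $^*$-isomorphism $M\to N$, because a Jordan $^*$-isomorphism is exactly a map whose restrictions to complementary central summands are Jordan $^*$-isomorphisms onto complementary central summands of the target, and the Jordan product and $^*$ are computed "summand by summand."

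The main obstacle, and the only place requiring genuine care rather than diagram-chasing, is verifying that the pieces glue to an honest \emph{Jordan} $^*$-isomorphism: one must check that the target central projections in $N$ (the images under $j$ and under $k^{-1}$ of the various $M(w_n-w_{n+1})$ and of $Mw_\infty$) are pairwise orthogonal and sum to $1_N$, so that $N=\bigoplus(\text{those summands})$ exactly matches the decomposition of $M$ we built $\Phi$ from. This is where the index shift between the decomposition of $M$ and that of $Mw\cong N$ is used: the "odd" summands of $M$ are matched, via $k^{-1}$, with the "even-shifted" summands of $N$, and the bottom piece $w_\infty$ is matched via $j$. Once the central partitions on both sides are seen to correspond bijectively, additivity of Jordan multiplication and of the involution across central direct summands makes $\Phi$ a Jordan $^*$-isomorphism, completing the proof and hence Theorem~\ref{thm-l}.
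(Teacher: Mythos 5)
Your overall strategy --- a Schr\"oder--Bernstein argument carried out along central projections --- can be made to work, but the specific partition you choose, even $n$ versus odd $n$, is the wrong one, and the gluing check that you yourself flag as ``the only place requiring genuine care'' actually fails for it. Write $\theta:=k\circ j$ and $\eta:=j\circ k$, so that $w_1=k(z)$ and $w_2=k(\eta(z))$. Then $j$ maps $M(w_0-w_1)$ onto $N\bigl(j(1)-j(k(z))\bigr)=N\bigl(z-\eta(z)\bigr)$, while $k^{-1}$ maps $M(w_1-w_2)=M\bigl(k(z)-k(\eta(z))\bigr)$ onto exactly the same summand $N\bigl(z-\eta(z)\bigr)$. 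More generally $j(w_n)=\eta^n(z)$ and $k^{-1}(w_n)=\eta^{n-1}(z)$, so with $j$ on the even blocks and $k^{-1}$ on the odd blocks one gets $j(P)=k^{-1}(Q)$: the two halves of your map collide in $N$ instead of tiling it, and $Nz^{\perp}=N(1-j(1))$ is never in the range at all. A related slip feeds into this: $k(N)=Mw$ is \emph{not} $\bigoplus_{n\ge1}M(w_n-w_{n+1})\oplus Mw_\infty$; that sum equals $Mw_1=Mk(z)$, and the discrepancy $M(w-w_1)=k(Nz^{\perp})$ is precisely the piece your scheme mishandles.

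The fix is the classical one: partition by where the backward orbit terminates, not by parity. Refine each block as $w_n-w_{n+1}=\theta^n(1-w)+\theta^n(w-w_1)$, apply $j$ on $\bigoplus_n M\theta^n(1-w)$ and on $Mw_\infty$, and apply $k^{-1}$ on $\bigoplus_n M\theta^n(w-w_1)$; the images then really do form a partition of unity among the central projections of $N$, and the summand-by-summand gluing you describe is legitimate. With that correction your route is genuinely different from the paper's, which avoids all orbit bookkeeping by an absorption argument: setting $D:=Nq^{\perp}\oplus Mp^{\perp}$, it shows $M\cong M\oplus D$, iterates this to get $M\cong M\oplus\bigl(D\mathbin{\overline{\otimes}}\ell^{\infty}\bigr)$ and $N\cong N\oplus\bigl(D\mathbin{\overline{\otimes}}\ell^{\infty}\bigr)$, and concludes because both algebras absorb, and differ by, the same $\ell^{\infty}$-amplified summand. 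As written, however, your proof has a genuine gap at its central step.
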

\begin{proof}
There exist central projections $p\in\P(M)$ and $q\in\P(N)$ such that 
$M$, $N$ are Jordan $^*$-isomorphic to $Nq, Mp$, respectively. 
It follows 
\[
M = Mp \oplus Mp^{\perp} \cong N \oplus Mp^{\perp} = 
Nq \oplus Nq^{\perp} \oplus Mp^{\perp} \cong M \oplus Nq^{\perp} \oplus Mp^{\perp}.
\]
Take a Jordan $^*$-isomorphism $\Phi\colon M \oplus Nq^{\perp} \oplus Mp^{\perp} \to M $. 
We define $i\colon M \to M \oplus Nq^{\perp} \oplus Mp^{\perp}$ by 
$i(x) := x \oplus 0 \oplus 0$, $x\in M$.
Put $p_0:= \Phi(0 \oplus q^{\perp} \oplus p^{\perp})$ and $p_n := (\Phi\circ i)^n(p_0)$, $n\in \N$. 
Then $\{p_n\}_{n \geq 0}$ is an orthogonal family of central projections in $M$ and 
$Mp_n \cong Nq^{\perp} \oplus Mp^{\perp} \cong Mp_0$, $n\geq 0$.
Put $p_{\infty} := \vee_{n\geq 0} \, p_n$. 
We have 
\[
\begin{split}
M &= Mp_{\infty}^{\perp} \oplus Mp_{\infty}\\
&\cong Mp_{\infty}^{\perp} \oplus Mp_0 \mathbin{\overline{\otimes}} \ell^{\infty}\\ 
&\cong Mp_{\infty}^{\perp} \oplus Mp_0 \mathbin{\overline{\otimes}} \ell^{\infty} \oplus Mp_0 \mathbin{\overline{\otimes}} \ell^{\infty}\\
&\cong M \oplus Mp_0 \mathbin{\overline{\otimes}} \ell^{\infty}.
\end{split}
\]
Similarly, we obtain $N \cong N \oplus Mp_0 \mathbin{\overline{\otimes}} \ell^{\infty}$. 
Lastly, we have
\[
\begin{split}
M \oplus Mp_0 \mathbin{\overline{\otimes}} \ell^{\infty} &\cong 
Nq \oplus (Nq^{\perp} \oplus Mp^{\perp}) \mathbin{\overline{\otimes}} \ell^{\infty}\\
&\cong Nq \oplus Nq^{\perp} \oplus (Nq^{\perp} \oplus Mp^{\perp}) \mathbin{\overline{\otimes}} \ell^{\infty}\\
&= N \oplus Mp_0 \mathbin{\overline{\otimes}} \ell^{\infty}.
\end{split}
\]
\end{proof}

If in the above theorem we drop the condition concerning type I$_1$ summand, then we can find a counterexample. 
Indeed, any bijection between $\P(L^{\infty}([0, 1]))$ and $\P(L^{\infty}([0, 1]) \oplus \C)$ is isometric, but $L^{\infty}([0, 1])$ and $L^{\infty}([0, 1]) \oplus \C$ are not isomorphic.\medskip 

Theorem \ref{thm-G} also gives complete descriptions of surjective isometries between projection lattices of two von Neumann algebras. 
However, to give such a description in concrete situations is a complicated work. 
In the rest of this paper, we consider factor cases. 

Let $M$, $N$ be countably decomposable factors and 
suppose $T\colon \P(M) \to \P(N)$ is a surjective isometry.
Then Theorem \ref{thm-G} implies that $M$ and $N$ are Jordan $^*$-isomorphic, 
and thus $M$ and $N$ are $^*$-isomorphic or $^*$-antiisomorphic. 
We assume $M=N$. 
Note that only two points $0$ and $1$ are isolated in $\P(M)$, 
and thus $T$ restricts to a bijection on $\{0,1\}$.\smallskip

First we consider type I factors. 
Let $\H$ be a separable complex Hilbert space.
For $n\in \N = \{1, 2, \ldots\}$, the symbol $\P_n(\H)$ denotes the collection of rank $n$ projections in $\B(\H)$, and we put $\P^n(\H):=\{p^{\perp}\mid p\in\P_n(\H)\}$. 
The symbol $\P_{\infty}(\H)$ denotes the set of projections in $\B(\H)$ whose range and kernel are both infinite dimensional.

\begin{example}
If $M=\B(\H)$ is a type I$_N$ factor with $N \in \N$, 
then Grassmann spaces of $M$ are $\P_n(\H)$, $n = 1, 2, \ldots, N-1$.
In this case, there exists a mapping $\sigma$ from $\{1, 2, \ldots, N-1\}$ to $\{1, -1\}$ 
which satisfies the following conditions:
\begin{itemize} 
\item For $n=1,\ldots, N-1$, $\sigma(n) \sigma(N-n)=1$. 
\item If $\sigma(n) = 1$, the mapping $T$ restricts to a bijection $T_n$ from $\P_n(\H)$ onto itself. Moreover, $T_n$ extends uniquely to a $^*$-automorphism or a $^*$-antiautomorphism on $\B(\H)$. 
\item If $\sigma(n) = -1$, the mapping $T$ restricts to a bijection $T_n$ from $\P_n(\H)$ onto $\P_{N-n}(\H)$. Moreover, the mapping $p\mapsto 1-T_n(p)$, $p\in\P_n(\H)$ extends uniquely to a $^*$-automorphism or a $^*$-antiautomorphism on $\B(\H)$. 
\end{itemize}
\end{example}

\begin{example}
If $M=\B(\H)$ is a type I$_{\infty}$ factor, 
then  Grassmann spaces of $M$ are $\P_n(\H)$, 
$\P^n(\H)$, $n \in \N$ and $\P_{\infty}(\H)$. 
In this case, $T$ restricts to a bijection $T_{\infty}$ from $\P_{\infty}(\H)$ onto itself. 
Thus $T_{\infty}$ extends uniquely to a $^*$-automorphism or a $^*$-antiautomorphism, 
or the mapping $p\mapsto 1- T_{\infty}(p)$, $p\in \P$ extends uniquely to a $^*$-automorphism or a $^*$-antiautomorphism on $\B(\H)$. 
In addition, there exists a unique mapping $\sigma$ from $\N$ to $\{1, -1\}$ 
which satisfies the following conditions: 
\begin{itemize}
\item If $\sigma(n) = 1$, the mapping $T$ restricts to a bijection $T_n$ from $\P_n(\H)$ onto itself, and $T$ also restricts to a bijection $T^n$ from $\P^n(\H)$ onto itself. 
Each mapping extends uniquely to a $^*$-automorphism or a $^*$-antiautomorphism on $\B(\H)$. 
\item If $\sigma(n) = -1$, the mapping $T$ restricts to a bijection $T_n$ from $\P_n(\H)$ onto $\P^n(\H)$, and $T$ also restricts to a bijection $T^n$ from $\P^n(\H)$ onto $\P_n(\H)$. Thus the mappings $1-T_n$ and $1-T^n$ extends to a $^*$-automorphism or a $^*$-antiautomorphism on $\B(\H)$. 
\end{itemize}
\end{example}

Note that, for every $^*$-automorphism (resp.\ $^*$-antiautomorphism) $\Phi$ on $\B(\H)$, there exists a unitary (resp.\ antiunitary) $u$ on $\H$ which satisfies $\Phi(x)= uxu^*$ (resp.\ $\Phi(x) = ux^*u^*$), $x\in\B(\H)$.
Thus we see that our result actually generalizes the theorem due to Geh\'er and \v{S}emrl \cite[Theorem 1.2]{GS2}.

\begin{example}
If $M$ is a type II$_1$ factor with a normal tracial state $\tau$, 
then  Grassmann spaces of $M$ are 
$\P_{\lambda}(M) := \{p\in\P(M)\mid \tau(p)=\lambda\}$, $0<\lambda<1$. 
In this case, we can use the fact that every Jordan $^*$-automorphism on tracial factor preserves the trace. 
It follows there exists a unique mapping $\sigma\colon (0,1)\to \{1, -1\}$ which satisfies the following conditions: 
\begin{itemize} 
\item For $\lambda \in (0, 1)$, $\sigma(\lambda) \sigma(1-\lambda)=1$. 
\item If $\sigma(\lambda) = 1$, the mapping $T$ restricts to a bijection $T_{\lambda}$ from $\P_{\lambda}(M)$ onto itself. 
Moreover, $T_{\lambda}$ extends uniquely to a $^*$-automorphism or a $^*$-antiautomorphism on $M$. 
\item If $\sigma(\lambda) = -1$, the mapping $T$ restricts to a bijection $T_{\lambda}$ from $\P_{\lambda}(M)$ onto $\P_{1-\lambda}(M)$. Moreover, the mapping $p\mapsto 1-T_{\lambda}(p)$, $p\in\P_{\lambda}(M)$ extends uniquely to a $^*$-automorphism or a $^*$-antiautomorphism on $M$. 
\end{itemize}
\end{example}

\begin{example}
If $M$ is a type II$_{\infty}$ factor with a normal semifinite faithful tracial weight $\tau$, 
then  Grassmann spaces of $M$ are 
$\P_{(\lambda, 1)} := \{p\in\P(M)\mid \tau(p)=\lambda\}$, 
$\P_{(\lambda, -1)} := \{p^{\perp} \mid p\in \P_{\lambda}(M, \tau)\}$, $0<\lambda<\infty$, and 
$\P_{\infty} = \{p\in\P(M)\mid \tau(p)=\infty = \tau(p^{\perp})\}$.

This case is the most complicated. 
First, $T$ restricts to a bijection $T_{\infty}$ from $\P_{\infty}$ onto itself, 
and $T_{\infty}$ or $1-T_{\infty}$ extends to a $^*$-automorphism or a $^*$-antiautomorphism on $M$. 
In order to consider the other Grassmann spaces, we need to take the following multiplicative group into account: 
\[
\F := \{\lambda \in (0, \infty) \mid pMp\cong qMq\,\, \text{for some}\,\, p\in\P_{(1, 1)},\,\, q\in\P_{(\lambda, 1)} \}
\]
(Note that the symbol $\cong$ means that two algebras are Jordan $^*$-isomorphic. 
cf.\ The fundamental group of the II$_1$ factor $pMp$ is a subgroup of $\F$.)

There exists a bijection $f$ from $(0, \infty)\times \{1, -1\}$ onto itself which satisfies the following condition: 
Let $(\lambda, s), (\mu, t) \in(0, \infty)\times \{1, -1\}$ satisfy $f (\lambda, s) = (\mu, t)$. Then
\begin{itemize}
\item $\lambda/\mu\in \F$.  
\item The mapping $T$ restricts to a bijection $T_{(\lambda, s)}$ from $\P_{(\lambda, s)}$ onto $\P_{(\mu, t)}$. 
\item If $st=1$, then $T_{(\lambda, s)}$ extends uniquely to a $^*$-automorphism or a $^*$-antiautomorphism on $M$. 
\item If $st = -1$, the mapping $p\mapsto 1-T_{(\lambda, s)}(p)$, $p\in\P_{(\lambda, s)}$ extends uniquely to a $^*$-automorphism or a $^*$-antiautomorphism on $M$. 
\end{itemize}
\end{example}

\begin{example}
If $M$ is a type III factor, 
then the unique Grassmann space of $M$ is $\P:=\P(M)\setminus\{0, 1\}$. 
It follows that the restriction $T_0$ of $T$ on $\P$ is described as one and only one of the following four options: 
it extends uniquely to a $^*$-automorphism or a $^*$-antiautomorphism, or 
the mapping $p\mapsto 1-T_0(p)$, $p\in \P$ extends uniquely to a $^*$-automorphism or a $^*$-antiautomorphism. 
\end{example}

\medskip\medskip

\textbf{Acknowledgements} \quad 
The author appreciates Yasuyuki Kawahigashi who is the advisor of the author. 
Part of this research was performed while the author was visiting the Institute for Pure and Applied Mathematics (IPAM), which is supported by the National Science Foundation. 
This work was supported by Leading Graduate Course for Frontiers of Mathematical Sciences and Physics, MEXT, Japan.

\end{document}